\newcounter{alph}
\newtheorem{theo}[alph]{Theorem}
\numberwithin{equation}{section}
\newtheorem{cor}[equation]{Corollary}
\newtheorem{lem}[equation]{Lemma}
\newtheorem{thm}[equation]{Theorem}
\theoremstyle{definition}
\newtheorem{exa}[equation]{Example}
\newtheorem{rem}[equation]{Remark}
\def\C{\mathbb C}
\def\F{\mathbb F}
\def\R{\mathbb R}
\def\H{\mathbb H}
\def\O{\mathbb O}
\def\ve{\varepsilon}
\def\la{\langle}
\def\ra{\rangle}
\newcommand{\dive}{\operatorname{div}}
\newcommand{\ds}{\operatorname{ds}}
\newcommand{\dt}{\operatorname{dt}}
\newcommand{\ess}{\operatorname{ess}}
\newcommand{\id}{\operatorname{id}}
\newcommand{\im}{\operatorname{im}}
\newcommand{\tr}{\operatorname{tr}}
\newcommand{\vol}{\operatorname{vol}}
\begin{document}


\title[Differential form spectrum of orbifolds]
{On the differential form spectrum of geometrically finite orbifolds}
\author{Werner Ballmann}
\address
{WB: Max Planck Institute for Mathematics,
Vivatsgasse 7, 53111 Bonn}
\email{hwbllmnn\@@mpim-bonn.mpg.de}
\author{Panagiotis Polymerakis}
\address{PP: Max Planck Institute for Mathematics,
Vivatsgasse 7, 53111 Bonn}
\email{polymerp\@@mpim-bonn.mpg.de}

\thanks{\emph{Acknowledgments.}
We are grateful to the Max Planck Institute for Mathematics
and the Hausdorff Center for Mathematics in Bonn for their support and hospitality.}

\date{\today}

\subjclass[2010]{58J50, 35P15, 53C20}
\keywords{Orbifold, geometrically finite, Hodge-Laplacian, spectrum}

\begin{abstract}
We derive lower bounds for the essential spectrum of the Hodge-Laplacian
on geometrically finite orbifolds and their suborbifolds.
\end{abstract}

\maketitle

\tableofcontents

\section{Introduction}
\label{intro}

The essential spectrum of the Hodge-Laplacian on differential forms
on Riemannian manifolds depends on the geometric structure of the manifolds at infinity.
Geometrically finite manifolds are complete Riemannian manifolds with pinched negative sectional curvature
and restrictions on their geometric structure at infinity.
It is therefore interesting to investigate the influence of these restrictions on the essential spectrum
of the Hodge-Laplacian.
This has been undertaken in several instances, partly only for the Laplacian on functions,
see e.g. \cite{BunkeOlbrich00,Hamenstaedt04,Li20,MazzeoPhillips90}.
Since our methods allow for it, we study the Hodge-Laplacian on differential forms
on geometrically finite orbifolds. 

Let $O$ be a complete and connected Riemannian orbifold with sectional curvature $-b^2\le K_O\le-a^2$.
Then $O$ is a quotient $\Gamma\backslash X$,
where $X$ is a simply connected and complete Riemannian manifold
and $\Gamma$ a properly discontinuous group of isometries of $X$.
For such an $O$,
let $\Omega$ be the complement of the \emph{limit set} $\Lambda$ of $\Gamma$
in the \emph{ideal boundary} $X_\iota$ of $X$.
Following Bowditch \cite{Bowditch95}, we say that $O$ and $\Gamma$ are \emph{geometrically finite}
if $\Gamma\backslash(X\cup\Omega)$ has at most finitely many ends
and each end of $\Gamma\backslash(X\cup\Omega)$ is parabolic (see \cref{secgefi} for details).
A hyperbolic surface is geometrically finite if and only if it is of finite type.
However, the end structure of a geometrically finite orbifold
is, in general, much more complicated than that of surfaces of finite type.

Assume from now on that $O$ is a geometrically finite orbifold of dimension $m$,
and let $F\to O$ be a flat Riemannian vector bundle.
For $k\ge 0$, let $\Delta_k^F$ be the Hodge-Laplacian on $\Lambda^kO\otimes F$,
and denote by $\lambda_k(O,F)$ and $\lambda_k^{\ess}(O,F)$ the bottom of the spectrum
and the essential spectrum of the closure $\bar\Delta_k^F$ of $\Delta_k^F$ on $C^\infty_c(\Lambda^kO\otimes F)$.
Recall that $\lambda_k^{\ess}(O,F)>0$ if and only if $\bar\Delta_k^F$ is a Fredholm operator.

\begin{theo}\label{thmest}
If $(m-k-1)a - kb > 0$, then
\begin{align*}
	\lambda_k^{\ess}(O,F),\lambda_{m-k}^{\ess}(O,F)
	\ge \frac14((m-k-1)a - kb)^2.
\end{align*}
\end{theo}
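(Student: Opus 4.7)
The plan is a three-step argument: reduce via Persson's principle to a uniform estimate on parabolic ends, use Hodge duality to relate the two estimates in the statement, and establish the cusp estimate by a weighted Rayleigh quotient argument based on the Busemann function.

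By Persson's characterization in its orbifold/bundle version, $\lambda_k^{\ess}(O,F)=\sup_K\lambda_k^D(O\setminus K,F)$, where the supremum is over compact $K\subset O$ and $\lambda_k^D$ is the Dirichlet bottom of $\Delta_k^F$. Since $O$ is geometrically finite, $O\setminus K$ is a disjoint union of parabolic ends for $K$ sufficiently large, reducing the problem to a uniform lower bound on each deep cusp. The Hodge star, combined with the fibre metric on the flat Riemannian bundle $F$, intertwines $\Delta_k^F$ with $\Delta_{m-k}^F$, so $\lambda_k^{\ess}=\lambda_{m-k}^{\ess}$; hence only $\lambda_k^{\ess}$ needs to be bounded.

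A cusp lifts to a horoball in the universal cover $X$, centered at a parabolic fixed point $\xi$ on the ideal boundary. Let $\rho=\rho_\xi$ be the Busemann function. By the Riccati equation for Jacobi fields along geodesics emanating from $\xi$, combined with the pinched curvature bounds, $\nabla^2\rho$ has $\nabla\rho$ in its kernel, and its remaining eigenvalues $\mu_1,\ldots,\mu_{m-1}$ on horosphere-tangent directions lie in $[a,b]$; in particular $\Delta\rho=\mu_1+\cdots+\mu_{m-1}\in[(m-1)a,(m-1)b]$. The flat bundle $F$ trivializes on the contractible horoball by parallel transport, so it suffices to handle scalar-valued forms.

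For any $k$-form $\omega$ compactly supported in the cusp and any $c\in\R$, I would apply the Lagrange identity
\[
0\le|d\omega-c\,d\rho\wedge\omega|^2+|\delta\omega+c\,\iota_{\nabla\rho}\omega|^2=|d\omega|^2+|\delta\omega|^2+c^2|\omega|^2-2c\bigl(\la d\omega,d\rho\wedge\omega\ra-\la\delta\omega,\iota_{\nabla\rho}\omega\ra\bigr),
\]
using the pointwise identity $|d\rho\wedge\omega|^2+|\iota_{\nabla\rho}\omega|^2=|\omega|^2$. Integration by parts via Cartan's formula $L_{\nabla\rho}=d\iota_{\nabla\rho}+\iota_{\nabla\rho}d$, together with the adjoint relation $L_{\nabla\rho}^*=-L_{\nabla\rho}+2(\nabla^2\rho)^{(k)}+\Delta\rho$ on $\Lambda^k$ (where $(\nabla^2\rho)^{(k)}$ denotes the derivation extension of $\nabla^2\rho$), converts the integrated cross term into $\int\la N\omega,\omega\ra$ for the symmetric zeroth-order operator $N=\tfrac12\Delta\rho+(\nabla^2\rho)^{(k)}$, modulo contributions from $\iota_{\nabla\rho}\omega$ and $\delta\omega$ that can be absorbed into the analogous estimate on the vertical component. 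The key pointwise bound is $N\ge\tfrac12\bigl((m-k-1)a-kb\bigr)$: on a horizontal $k$-form associated to a subset $S\subset\{1,\ldots,m-1\}$ of horosphere eigendirections, $N$ has eigenvalue $\tfrac12(\sum_{i\notin S}\mu_i-\sum_{i\in S}\mu_i)\ge\tfrac12((m-k-1)a-kb)$, while on vertical components $d\rho\wedge\eta$ (with $\eta$ horizontal of degree $k-1$) the corresponding eigenvalue $\tfrac12((m-k)a-(k-1)b)$ is strictly larger. Choosing $c=\tfrac12((m-k-1)a-kb)>0$ and optimizing the resulting quadratic in $c$ yields $\int(|d\omega|^2+|\delta\omega|^2)\ge c^2\int|\omega|^2$, as required.

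The central obstacle is the sharp identification of the constant. The value $(m-k-1)a-kb$ does not arise from independently bounding $\Delta\rho\ge(m-1)a$ and the eigenvalues of $(\nabla^2\rho)^{(k)}$ from above by $kb$, which would produce only the weaker $\tfrac14((m-1)a-2kb)^2$. Instead, one must exploit that $\Delta\rho$ and the eigenvalues of $(\nabla^2\rho)^{(k)}$ are \emph{linked} through the same principal curvatures $\mu_i$ of the horospheres, so that $\Delta\rho-2\sum_{i\in S}\mu_i=\sum_{i\notin S}\mu_i-\sum_{i\in S}\mu_i$ telescopes to yield the sharp extremum precisely when the in-$S$ curvatures attain the maximum $b$ and the off-$S$ curvatures attain the minimum $a$. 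Handling mixed horizontal and vertical components of $\omega$ uniformly, together with the orbifold-theoretic details of Persson's reduction and the possible presence of singular orbifold points within cusps, constitutes the main technical content.
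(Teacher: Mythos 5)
Your analytic core is sound and essentially parallels the paper's machinery: the operator $N$ with eigenvalue $\tfrac12\bigl(\sum_{i\notin S}\mu_i-\sum_{i\in S}\mu_i\bigr)$ is exactly the quantity $\delta_k/2$ in the paper's refined Donnelly--Xavier inequality, and your observation that the constant comes from the \emph{linked} principal curvatures rather than from separate bounds on $\Delta\rho$ and $(\nabla^2\rho)^{(k)}$ is precisely the point of that refinement. The duality $\lambda_k^{\ess}=\lambda_{m-k}^{\ess}$ for a flat Riemannian bundle is also fine (and is in any case built into the inequality, which treats degrees $k$ and $m-k$ simultaneously).

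The genuine gap is the geometric reduction. You claim that for $K$ large the complement $O\setminus K$ is a disjoint union of parabolic ends, each of which ``lifts to a horoball centered at a parabolic fixed point,'' so that the whole estimate reduces to a Busemann-weighted computation on cusps. This conflates geometric finiteness with finite volume. Geometric finiteness is a condition on $\Gamma\backslash(X\cup\Omega)$: when the domain of discontinuity $\Omega$ is nonempty (e.g.\ in the convex cocompact case, where there are no parabolic ends at all but $O$ is noncompact with infinite volume), the complement of any compact set contains the flaring region outside the convex core, which is not covered by horoballs and on which the Busemann function of a parabolic point is not even available. Moreover, even the parabolic ends need not be cusps: the stabilizer $G_p$ need not act cocompactly on horospheres, so a neighborhood of a parabolic end is the more complicated region $G_p\backslash C_p(x)$, only whose ``thin part'' is controlled by a horoball, while the rest flares toward $\Omega$. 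This is exactly the difficulty the paper's main construction addresses: outside the parabolic neighborhoods one must use the gradient $V_0$ of the distance to the convex core (whose Hessian is only asymptotically pinched in $[a,b]$, via the comparison for distance functions to convex sets), deep inside the thin part one uses the Busemann gradient $V_p$, and the two are glued on an intermediate region where Bowditch's structural lemmas guarantee $|V_0-V_p|<\ve$, so that the cutoff error is small. Without such a construction (or some substitute for it), your argument proves the theorem only for finite-volume orbifolds whose ends are genuine cusps, i.e.\ essentially the previously known case, not for general geometrically finite $O$. A secondary, smaller point: the reduction ``$F$ trivializes on the contractible horoball, so it suffices to treat scalar forms'' is not legitimate as stated, since the forms live on the quotient $G_p\backslash B$, where the flat bundle may have nontrivial holonomy; one should simply run the weighted estimate directly for $F$-valued forms, which it permits.
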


\cref{thmest} refines \cite[Theorem 3.2]{DonnellyXavier84} in the case considered there,
namely $O=X$, and generalizes {\cite[Theorem B]{BallmannBruening01},
where manifolds $M=\Gamma\backslash X$ of finite volume are discussed.
By \cite[Example 5.5]{BallmannBruening01}, the estimate is sharp.

We say that a Riemannian orbifold $O$ is \emph{hyperbolic}
if it can be written as a quotient $\Gamma\backslash X$,
where $X$ is one of the hyperbolic spaces $H_\mathbb{F}^\ell$ with $\mathbb{F}\in\{\R,\C,\H,\O\}$,
endowed with its canonical Riemannian metric,
which is unique if normalized such that the maximum of its sectional curvature equals $-1$.
Then $m=\dim X=\ell d$, where $d=\dim_\R \mathbb{F}$, and $h_X = (\ell+1)d-2$
is equal to the asymptotic volume growth of $X$,
frequently referred to as the \emph{volume entropy of $X$}.

\begin{theo}\label{thmest2}
Suppose that $O$ is a geometrically finite quotient of $H_\mathbb{F}^\ell$.
Let
\begin{align*}
	d_k =
	\begin{cases}
	(\ell+1)d-2 - 4k &\text{for $0\le k\le d-1$,} \\
	(\ell-1)d - 2k &\text{for $k\ge d-1$,}
	\end{cases}
\end{align*}
and suppose that $d_k>0$.
Then $\lambda_k^{\ess}(O,F),\lambda_{m-k}^{\ess}(O,F)\ge d_k^2/4$.
\end{theo}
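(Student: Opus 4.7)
The plan is to adapt the proof of \cref{thmest} by replacing its generic curvature pinching estimate with a precise computation of the Weitzenböck curvature endomorphism on $H_\mathbb{F}^\ell$. The bound of \cref{thmest} uses only the pinching constants $a=1$, $b=2$ of $H_\mathbb{F}^\ell$ and ignores the refined block structure of its curvature tensor. Since the essential spectrum on a geometrically finite orbifold is controlled by the parabolic ends, and each such end is a quotient of a horoball $B\subset H_\mathbb{F}^\ell$ by a parabolic subgroup of $\Gamma$, it suffices to obtain a sharp estimate on such horoballs.

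On such a horoball, parameterize $B\cong[0,\infty)\times N$ by a Busemann function $s$, where $N$ is a horosphere. The horosphere carries a natural splitting $TN=V_1\oplus V_2$ into the $\mathbb{F}$-perpendicular directions (dimension $(\ell-1)d$) and the $\mathbb{F}$-parallel directions (dimension $d-1$), on which the shape operator equals $\id$ and $2\id$ respectively, giving mean curvature $h_X=(\ell+1)d-2$. The exterior bundle over $N$ decomposes accordingly by bidegree $(p,q)$ with $p+q=k$, where $p$ counts $V_1^*$-indices and $q$ counts $V_2^*$-indices. I would then compute the Weitzenböck curvature endomorphism of $H_\mathbb{F}^\ell$ on each such bidegree type using the explicit form of the curvature tensor; the resulting minimum eigenvalue on $k$-forms is linear in $(p,q)$ with distinct slopes in $p$ and $q$. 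Minimizing over $p+q=k$ subject to the constraint $0\le q\le d-1$ identifies the extremal bidegree as $q=\min(k,d-1)$ and yields precisely the piecewise linear formula for $d_k$ in the statement.

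After separating out the radial direction by conjugating with $\exp(h_X s/2)$ along the Busemann flow, the refined Weitzenböck estimate together with the nonnegativity of the tangential Laplacian on $N$ produces the effective exponential decay rate $d_k/2$ for approximate eigenforms pushed into the cusp, and hence the bound $d_k^2/4$ on the bottom of the essential spectrum, as in the proof of \cref{thmest}. The dual statement for $\lambda_{m-k}^{\ess}(O,F)$ is deduced from Hodge duality, which intertwines the Laplacians on $k$- and $(m-k)$-forms up to the identification $F\cong F^*$ as Riemannian bundles. The main obstacle is the bidegree calculation: one must verify that the minimum Weitzenböck eigenvalue over forms with $p+q=k$ is realized at $q=\min(k,d-1)$ and equals $d_k$ in both regimes. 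The transition at $k=d-1$ comes entirely from the ceiling $q\le\dim V_2=d-1$ on the vertical degree; for $k<d-1$ the minimum is attained by a purely vertical form with slope $-4$ per additional degree, while for $k\ge d-1$ the vertical degree saturates and the remaining horizontal indices contribute at slope $-2$, which explains the piecewise definition of $d_k$.
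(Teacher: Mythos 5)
Your reduction to horoball quotients is where the argument breaks down. A geometrically finite orbifold is not exhausted near infinity by parabolic ends: unless $\Omega=\emptyset$ (the finite-volume case), $O$ also has ends coming from the domain of discontinuity --- in the extreme case of a convex cocompact quotient there are no parabolic ends at all, and yet $\lambda_k^{\ess}$ must still be estimated on the expanding region outside the convex core. Moreover, even the parabolic ends of a general geometrically finite orbifold are \emph{not} quotients of horoballs by parabolic groups acting cocompactly on horospheres; that description is valid only when $\Omega=\emptyset$. In general a neighborhood of a parabolic end is $G_p\backslash C_p(x)$ with $C_p(x)$ a convex cone-like set, its cross-section is non-compact, and your product model $B\cong[0,\infty)\times N$ with separation of variables along the Busemann flow does not apply to it. Handling precisely this structure is the actual content of the paper's proof: \cref{mainthm} glues the gradient $V_0$ of the distance to the convex hull of the limit set (which governs the ends over $\Omega$) to the Busemann gradients $V_p$ at the parabolic points, using Bowditch's structural lemmas, so as to obtain a single almost-unit vector field $V$ near infinity with $\nabla V$ $\ve$-close to $|R(\cdot,V)V|^{1/2}$; the spectral bound then follows from the vector-field inequality of \cref{BB9} together with the characterization of the essential spectrum in \cref{essu}, not from a Bochner--Weitzenb\"ock argument on a cusp.

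A secondary point: the constant $d_k$ is not the minimal eigenvalue of the Weitzenb\"ock curvature endomorphism on $k$-forms. It is the trace quantity $\delta_k(A)=\min_L(\tr A|_L-\tr A|_{L^\perp})$, $L$ of codimension $k$, for the symmetric endomorphism $A=|R(\cdot,V)V|^{1/2}$, which has eigenvalue $2$ with multiplicity $d-1$, eigenvalue $1$ with multiplicity $(\ell-1)d$, and eigenvalue $0$ in the direction of $V$; this is the bookkeeping of \cref{hyfix}, and your bidegree minimization $q=\min(k,d-1)$ is the same arithmetic in disguise, so that part of the computation is sound. What is missing is any treatment of the non-cuspidal geometry at infinity (and of parabolic ends with non-compact cross-section), which is exactly what distinguishes \cref{thmest2} from a statement about finite-volume cusps.
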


If $\ell\ge3$, we have $d_k>0$ for $k<(\ell-1)d/2=(m-d)/2$.
For $\ell=2$ and $\mathbb{F}\in\{\R,\C\}$, only $d_0>0$.
For $\ell=2$ and $\mathbb{F}=\H$, $d_k>0$ for $0\le k\le 2$.
For $\mathbb{F}=\O$, we have $\ell=2$ and $d_k>0$ for $0\le k\le 5$.

Clearly, if $\Omega\ne\emptyset$,
any neighborhood of infinity of $O=\Gamma\backslash H_\mathbb{F}^\ell$
contains balls which are isometric to a ball of radius $r$ in $H_\mathbb{F}^\ell$, for any $r>0$.
Since the pullback of $F$ to $H_\mathbb{F}^\ell$ is a trivial flat bundle,
it follows that the essential spectrum of $\Delta_k^F$
contains the spectrum of $\Delta_k$ on $H_\mathbb{F}^\ell$.
On the other hand,
the contribution of any parabolic end of $O$ to the essential spectrum of $\Delta_k^F$
is contained in the spectrum of $\Delta_k$ on $H_\mathbb{F}^\ell$,
by \cite[Theorem 1.1]{Polymerakis20b}.
There are two extreme cases:
If $O$ has no parabolic end, then $O$ is convex cocompact.
If $\Omega=\emptyset$, then $\vol O<\infty$ by Bowditch's (F5),
and then (neighborhoods of) the ends of $O$ are cusps, that is,
quotients of horoballs $B$ by parabolic subgroups of $\Gamma$
which act cocompactly on the horospheres $\partial B$.
Cusps are the easiest type of parabolic ends.

In several cases, there are better estimates in the literature than the ones in \cref{thmest2}.
For geometrically finite real hyperbolic manifolds, Mazzeo and Phillips
\cite[Theorem 1.11]{MazzeoPhillips90} calculate the essential spectrum of $\Delta_k$.
Moreover, Bunke and Olbrich determine in \cite{BunkeOlbrich00} the spectral decomposition
for the Hodge-Laplacian on convex cocompact hyperbolic orbifolds.

Carron and Pedon \cite[Theorem B]{CarronPedon04} obtain a lower bound for $\lambda_k(M)$
for hyperbolic manifolds $M=\Gamma\backslash H_{\F}^\ell$,
which are not necessarily geometrically finite,
but for which the critical exponent of $\Gamma$ is at most half the volume growth of $H_{\F}^\ell$.
In the case $\F\neq\R$, there estimate is sharper than our lower bound for $\lambda_0^{\ess}(M)$.
For example, in the complex case,
they show that $\lambda_k(M)\ge(\ell-k)^2$ for $k\ne\ell$ if the critical exponent of $\Gamma$ is at most $\ell$
versus our $\lambda_k^{\ess}(M)\ge(\ell-k-1)^2$ for $k\ne0,\ell-1,\ell,\ell+1,m$ if $M$ is geometrically finite.
(For $k=0,m$, the estimates coincide.)
We suspect that their estimates also hold for geometrically finite orbifolds, but, 
of course, because of the possibility of non-trivial $L^2$-cohomology or small eigenvalues in our context,
only as estimates for the essential spectrum.

Our approach also extends to suborbifolds,
provided their second fundamental form satisfies appropriate restrictions.
We consider properly immersed suborbifolds $P\looparrowright O$ of dimension $n$
together with flat Riemannian vector bundles  $F\to P$.
A result that is easy to state without further preparation is as follows.

\begin{theo}\label{minsub}
If $P$ is a properly immersed minimal suborbifold of a geometrically finite
real hyperbolic orbifold and $\dim_{\R}P=n$, then
\begin{align*}
	\lambda_0^{\ess}(P,F), \lambda_n^{\ess}(P,F) \ge (n-1)^2/4.
\end{align*}
If $P$ is a properly immersed complex suborbifold of a geometrically finite
complex hyperbolic orbifold or a properly immersed quaternion-K\"ahler suborbifold
of a geometrically finite quaternion hyperbolic orbifold and $\dim_{\R}P=n$, then
\begin{align*}
	\lambda_0^{\ess}(P,F), \lambda_n^{\ess}(P,F) \ge n^2/4.
\end{align*}
\end{theo}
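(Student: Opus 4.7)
The plan is to apply a McKean-type test function argument using a Busemann function $b$ on the ambient rank-one hyperbolic space $X$, exploiting minimality (in the real case) or the invariance of $T_qP$ under the K\"ahler/quaternionic structure (in the complex and quaternion-K\"ahler cases) to obtain a pointwise lower bound on $\Delta^P b$.

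As preparation I would reduce to the scalar Laplacian: Hodge duality identifies $\lambda_n^{\ess}(P,F)$ with $\lambda_0^{\ess}(P,F)$, and Kato's inequality $|\nabla|s||\le|\nabla^F s|$ for sections $s$ of the flat Riemannian bundle $F$ gives $\lambda_0^{\ess}(P,F)\ge\lambda_0^{\ess}(P)$. By Persson's characterization of the essential spectrum it then suffices to bound the Rayleigh quotient from below on $C_c^\infty(P\setminus K)$ for arbitrarily large compact $K\subset P$. Properness of $P\looparrowright O$ together with geometric finiteness of $O$ ensures that, for $K$ large, the complement $P\setminus K$ is covered by finitely many end neighborhoods, each projecting into either a cusp $\Gamma_p\backslash B_p$ at a bounded parabolic fixed point $p\in\Lambda$ or into a funnel coming from a small horoball $B_\xi$ at some $\xi\in\Omega$ that embeds into $O$. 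A partition of unity reduces the task to $\phi$ supported in one such neighborhood. On each, I take $b$ to be the Busemann function based at the corresponding ideal point---$b_p$ is $\Gamma_p$-invariant and descends; $b_\xi$ descends via the embedded horoball---and pull it back along the immersion to a smooth function on the support of $\phi$.

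The trace of $\nabla^2 b$ along $T_qP$ is then controlled by the standard rank-one Hessian formulas. In $H_\R^m$, $\nabla^2 b = g - db\otimes db$, so $\operatorname{tr}_{T_qP}\nabla^2 b = n-|\nabla^P b|^2 \ge n-1$. In $H_\C^\ell$, the Hessian has eigenvalues $0, 2, 1$ on $\R\nabla b$, $\R J\nabla b$, and their horospheric complement; $J$-invariance of $T_qP$ yields $|(J\nabla b)^\top|^2=|\nabla^P b|^2$, and the trace evaluates to $n$ identically. In $H_\H^\ell$, using the three almost complex structures $J_1, J_2, J_3$ one similarly obtains $n+2|\nabla^P b|^2\ge n$. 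Minimality of $P$---automatic for complex and quaternion-K\"ahler suborbifolds---kills the mean-curvature term, giving $\Delta^P b\ge c$ pointwise with $c=n-1, n, n$ respectively, while $|\nabla^P b|\le|\nabla b|=1$. The McKean step now integrates by parts:
\begin{equation*}
c\int_P\phi^2 \le \int_P\phi^2\,\Delta^P b = -2\int_P\phi\,\la\nabla\phi,\nabla^P b\ra \le 2\|\phi\|_2\|\nabla\phi\|_2,
\end{equation*}
yielding $\|\nabla\phi\|_2^2\ge(c^2/4)\|\phi\|_2^2$ and hence the claimed essential-spectrum bounds.

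The main obstacle I anticipate is the geometric bookkeeping around the orbifold and end structure: a general Busemann function is not $\Gamma$-equivariant, so one has to truncate $P$ near infinity into pieces each of which sits over a region of $O$ where some appropriate $b$ descends, and then splice the local McKean estimates via a partition of unity without degrading the constant $c^2/4$. The remaining points---minimality of complex and quaternion-K\"ahler suborbifolds, and the quaternionic identity $\sum_\alpha|(J_\alpha\nabla b)^\top|^2=3|\nabla^P b|^2$ which requires invariance of $T_qP$ under each $J_\alpha$ individually---are routine.
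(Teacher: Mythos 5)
Your pointwise computations (the Busemann Hessian traces $n-|\nabla^P b|^2\ge n-1$ in the real case, $=n$ on $J$-invariant subspaces, $\ge n$ in the quaternion-K\"ahler case, plus minimality/total geodesy killing the mean curvature term) match the paper's \cref{hyfix}, and the reductions via Kato and Hodge duality are workable (for $\lambda_n^{\ess}$ on a possibly non-orientable orbifold you must twist by the orientation line bundle; the paper instead gets degrees $0$ and $n$ simultaneously, with the flat bundle included, from the Dirac/Clifford identity behind \cref{BB9}). The genuine gap is the covering claim on which the whole localization rests: it is \emph{not} true that, for large compact $K$, the set $P\setminus K$ (or $O\setminus C$) is covered by finitely many cusp neighborhoods together with finitely many embedded horoball quotients at points $\xi\in\Omega$. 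The non-cuspidal part of a neighborhood of infinity of a geometrically finite orbifold is a neighborhood of the "funnel" region lying over all of $\Gamma\backslash\Omega$, i.e.\ over a positive-dimensional continuum of directions at infinity, while a horoball at $\xi$ captures only points whose closure at infinity is $\{\xi\}$. Already for the hyperbolic cylinder ($\Gamma$ cyclic hyperbolic in $H_\R^2$, $P=O$, which your statement allows since $P=O$ is a minimal suborbifold) no finite union of $\Gamma$-translates of horoballs based in $\Omega$ contains a neighborhood of a funnel end: a geodesic ray going out the funnel toward a boundary point $\eta\in\Omega$ not in the (countable) orbit of the chosen basepoints eventually leaves every one of these horoballs for good, because each horoball's closure meets $X_\iota$ only at its center and the translated horoballs accumulate only on $\Lambda$. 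So the partition-of-unity reduction has no covering to work with precisely where the theorem is non-trivial.

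This is not "geometric bookkeeping" to be deferred; it is the actual content of the paper's \cref{mainthm}. The correct object in the funnel region is the distance function to the convex core $C_\Gamma=\Gamma\backslash(H\Lambda\cap X)$: its gradient $V_0$ is globally defined and $\Gamma$-invariant on the complement of the core, but it is only $C^{0,1}$ and its Hessian is \emph{not} the Busemann Hessian — one needs the Riccati comparison (\cref{curvest}, \cref{riccatil}) to show that for large distance the eigenvalues approach the Busemann values (respectively $\|\nabla V_0-|R(\cdot,V_0)V_0|^{1/2}\|<\ve$ in the locally symmetric case), a step absent from your plan. Busemann fields $V_p$ are then used only inside the parabolic end neighborhoods $G_p\backslash C_p(x)$, and the two fields are interpolated by a single cutoff exactly in a region where Bowditch's structure results (\cref{prepar} and the lemmas quoted in \cref{secv}) guarantee $|V_0-V_p|<\ve$, so the error term $\nabla\psi\otimes(V_0-V_p)$ is small and the constant survives. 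By contrast, splicing unrelated local Busemann functions with an IMS-type partition of unity produces cross terms of size $|\nabla\chi_j|$ with no smallness, and on overlaps the two Busemann gradients need not be close, so even where your covering exists the sharp constants $(n-1)^2/4$ and $n^2/4$ would be degraded. To repair the proof you would have to reproduce, in one form or another, the construction of a single almost-unit vector field near infinity with the stated Hessian asymptotics — which is the paper's route.
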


Theorems \ref{thmest} -- \ref{minsub} are consequences of a refined version of an inequality
of Donnelly and Xavier \cite[Theorem 2.2]{DonnellyXavier84} and our

\begin{theo}[Main result]\label{mainthm}
For any geometrically finite orbifold $O$ of dimension $m$
with sectional curvature $-b^2\le K_O\le-a^2<0$ and any $\ve>0$,
there are a compact subset $C\subseteq O$ and a vector field $V$ on $O\setminus C$,
which is locally Lipschitz with $|V|=1\pm\ve$, such that,
on a subset $R\subseteq O\setminus C$ of full measure,
the covariant derivative $\nabla V|_x$ exists and its symmetric part has
an eigenvalue in $(-\ve,\ve)$ of multiplicity one with eigenline $\ve$-close to $V_x$
and the other eigenvalues in $(a-\ve,b+\ve)$, for all $x\in R$.
Moreover, if $O=\Gamma\backslash H_\mathbb{F}^\ell$ is hyperbolic, then
\begin{align*}
	\|\nabla V - |R(.,V)V|^{1/2}\|_{R,\infty} < \ve.
\end{align*}
\end{theo}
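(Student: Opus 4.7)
The plan is to construct $V$ end-by-end as the gradient of a Busemann-type function. By geometric finiteness in the sense of Bowditch, there is a compact set $C_0\subseteq O$ whose complement splits, after a slight thickening, into finitely many pieces $U_\alpha$ of two types: cuspidal ends, each of which lifts to a $\Gamma_\xi$-invariant horoball $B_\xi\subseteq X$ at a bounded parabolic fixed point $\xi\in\Lambda$; and, if $\Omega\neq\emptyset$, flaring ends opening toward $\Gamma\backslash\Omega$ and reachable by the nearest-point projection onto a $\Gamma$-invariant convex thickening $K$ of the convex hull of $\Lambda$.

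On a cuspidal piece, take $V=\nabla b_\xi$ for the Busemann function $b_\xi$ based at $\xi$. The $\Gamma_\xi$-invariance ensures descent to the orbifold end, $V$ is a unit field, and $\nabla V$ coincides with the shape operator of the horospheres through the point. In pinched negative curvature the Riccati/Heintze--Im Hof comparison gives that this shape operator has a single zero eigenvalue in the direction of $V$ and the remaining eigenvalues in $[a,b]$. On a flaring piece, take $V=\nabla r$ for $r(x)=d(x,K)$; then $r$ is $\Gamma$-invariant, locally Lipschitz, smooth off a negligible set, $|V|=1$ there, and convex-set Hessian comparison again yields one zero eigenvalue along $V$ and the rest in $[a,b]$. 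Orbifold singularities require no extra care because the ingredients are $\Gamma$-invariant functions on the smooth manifold $X$.

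Glue the local fields $V_\alpha$ with a locally Lipschitz partition of unity $\{\phi_\alpha\}$ supported in thin overlap strips, chosen so that on the overlaps the competing $V_\alpha$ already agree within $\ve$; then the derivative term $\sum_\alpha d\phi_\alpha\otimes V_\alpha$ arising in $\nabla V$ contributes only $O(\ve)$, and both $|V|=1\pm\ve$ and the eigenvalue spread of the symmetric part of $\nabla V$ are preserved. In the hyperbolic case all local pieces are Busemann gradients, the Jacobi operator $R(\cdot,V)V$ has a geodesically constant spectrum along the integral curves of $V$, and the shape operator of horospheres equals the stable Riccati solution $U=|R(\cdot,V)V|^{1/2}$; hence $\nabla V=|R(\cdot,V)V|^{1/2}$ exactly on each unmixed piece, and the bound on $\|\nabla V-|R(\cdot,V)V|^{1/2}\|_{R,\infty}$ reflects only the transition strips.

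The main obstacle will be the geometric coherence of the decomposition: the horoballs at different parabolic fixed points and the convex thickening $K$ must be arranged so that outside a single compact set they are essentially disjoint, and so that on their mutual boundaries the competing gradients differ by at most $\ve$. This is a Margulis-lemma-type separation statement that uses precisely Bowditch's conditions on bounded parabolic points and the cocompactness of $\Gamma$ on the thick part of $C(\Lambda)$; once it is in place, the Lipschitz partition of unity absorbs the remaining discrepancies.
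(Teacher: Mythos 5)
Your overall architecture is the same as the paper's: the gradient $V_0$ of the distance to (a convex thickening of) the convex core away from the cusps, Busemann gradients $V_p$ in the cusps, and a Lipschitz interpolation between them. The genuine gap is the step you yourself flag as ``the main obstacle'' and then assume: that the overlap strips can be chosen so that the competing gradients already agree within $\ve$ there. This is not a matter of making the strips thin; as stated it is false. The hull $H\Lambda$ enters every horoball centered at a parabolic point $p$ (it contains the geodesics from $p$ to the other limit points), and at a point deep in the cusp but at bounded distance from such a geodesic $(q,p)$, the field $\nabla d(\cdot,H\Lambda)$ is nearly orthogonal to the cusp direction while $\nabla b_p$ is nearly parallel to it; so on any horosphere-based transition strip the two fields differ by an angle close to $\pi/2$ near the orbit $G_p(q,p)$. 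The technical core of the paper's proof is devoted exactly to this point: using Bowditch's Lemmas 4.10, 4.11 and Proposition 4.12 on the thin part $T_\ve(G_p)$, it proves \cref{prepar}, which yields $|V_0-V_p|<\ve$ on $C_p(x_1)\setminus N_s(G_p(q,p))$, i.e.\ away from a uniform neighborhood of that orbit of geodesics; the remaining bad part of the transition band, $(C_p(x_1)\cap N_s(G_p(q,p)))\setminus\mathring B_2$, is compact modulo $G_p$ and is absorbed into the compact set $C$, while deeper in the cusp (inside $B_2$) only $V_p$ is used, so no closeness is needed there. Without an argument of this type your gluing step does not go through, and a ``Margulis-lemma-type separation statement'' alone does not supply it, since the problematic region is not the overlap of two cusp neighborhoods but the interface of each cusp with the convex core.

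Two smaller inaccuracies: on the flaring pieces the Hessian comparison for the distance to a convex set gives eigenvalues only in $[a\tanh(ar),\,b\coth(br)]$ on $V^\perp$ (\cref{curvest}), so the bounds $(a-\ve,b+\ve)$ are obtained only after discarding a large sublevel set into $C$ -- harmless, but it must be said; and in the hyperbolic case the identity $\nabla V=|R(\cdot,V)V|^{1/2}$ is exact only for Busemann gradients (\cref{riccatic}); for the distance to the convex core it holds only asymptotically, with exponential rate, by the Riccati convergence of \cref{riccatil}, so even on your ``unmixed'' flaring pieces the claim is an estimate rather than an identity.
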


In the convex cocompact case, the vector field $V$ may be taken to be the gradient field
of the distance function to a convex core of $O$.
Along cusps, we use the gradient field of the associated Busemann function, at least close to infinity.
In general, $V$ arises as a combination of gradient fields of distance functions.

\subsection{Structure of the article}
\label{sustr}
In \cref{secconv},
we collect some results on distance functions to convex sets in Riemannian manifolds.
Most of this is known, but we provide some arguments in places where it seems appropriate.
For later convenience, we also prove \cref{mainthm} in two simple cases.
In \cref{secdoxa},
we derive a refined version of an inequality of Donnelly-Xavier
and discuss some applications in a geometrically simple situation.
In \cref{secgefi}, we extract from \cite{Bowditch95} a short exposition
of the structure of geometrically finite orbifolds.
In \cref{secv}, we prove \cref{mainthm}.
The proof of Theorems \ref{thmest} -- \ref{minsub} is contained in the short \cref{secapp}.
In \cref{symmetry}, we discuss the symmetry of second derivatives of $C^{1,1}$-functions,
an issue, for which we do not know a suitable reference.

\section{Distance to convex subsets}
\label{secconv}

Let $C$ be a closed and convex subset of a complete and connected orbifold $O$.
For the convenience of the reader,
we collect some results about the distance function to $C$.
Two references for this material are \cite{ParkkonenPaulin12,Walter76}. 

For simplicity, we assume throughout that the sectional curvature of $O$ is non-positive.
Then $O=\Gamma\backslash X$,
where $X$ is a simply connected and complete Riemannian manifold of non-positive sectional curvature
and $\Gamma$ a properly discontinuous group of isometries of $X$.
We also assume throughout that the preimage $\tilde C$ of $C$ in $X$ is connected.
Then $\tilde C$ is closed and convex in $X$.
A typical example is the case,
where the sectional curvature of $O$ is negatively pinched and $C$ is the convex core of $O$.

For each $x\in O\setminus C$, there is a unique geodesic $c_x\colon[0,1]\to O$
from $x$ to $C$ such that $\pi(x)=c_x(1)\in C$ satisfies
\begin{align}\label{propi}
	d(x,C) = d(x,\pi(x)).
\end{align}
We call $\pi\colon O\to C$ the \emph{(nearest point) projection to $C$}
and let $f\colon O\to\R$ be the distance function to $C$, $f(x)=d(x,C)$.
Then $f$ is convex and admits Lipschitz constant one.

Now $f(c_x(t))=r(1-t)$ for all $0\le t\le1$, where $r=f(x)$.
Since $f$ admits Lipschitz constant one, the first variation therefore implies that
\begin{align*}
	f(c(s)) - f(x)
	= \la V(x),\dot c(0)\ra s + o(s),
\end{align*}
for any geodesic $c$ from $x$ and sufficiently small $s$, where
\begin{align}\label{gradf}
	V(x) = \frac{-1}r\dot c_x(0) = \frac{-1}{\|\dot c_x(0)\|}\dot c_x(0).
\end{align}
By uniqueness, $\dot c_x(0)$ depends continuously on $x$,
and hence $f$ is $C^1$ on $O\setminus C$ with gradient $\nabla f=V$.

\begin{lem}\label{lemcon}
For $C\subseteq O=\Gamma\backslash X$ as above, we have:
\begin{enumerate}
\item\label{lc2}
the projection $\pi$ admits Lipschitz constant one;
\item\label{lc3}
the distance function $f$ is $C^{1,1}$ on $O\setminus C$
and twice differentiable exactly at the points of $O\setminus C$
at which $\pi$ is differentiable;
\item\label{lc4}
the sublevels $C_r=\{f\le r\}$ of $f$ are convex for all $r>0$.
\end{enumerate}
\end{lem}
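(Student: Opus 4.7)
The plan is to lift all three assertions to the universal cover $X$, invoke standard results about nearest point projections in Hadamard manifolds, and push the conclusions down to $O$ using the $\Gamma$-equivariance of the lift. Part (3) is essentially immediate: since $f$ has already been shown (just before the lemma) to be convex, so is its lift $\tilde f$ to $X$, and convex functions have convex sublevels. The lifted sublevel $\{\tilde f \le r\}$ is a closed, convex, $\Gamma$-invariant subset of $X$, whose $\Gamma$-quotient is $C_r$, so $C_r$ is a convex suborbifold of $O$.

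For (1), which is the technical heart of the lemma, I would first establish that the nearest point projection $\tilde\pi\colon X\to\tilde C$ is $1$-Lipschitz. Given $\tilde x,\tilde y\in X\setminus\tilde C$ with projections $\tilde p,\tilde q$, the minimizing property together with the first variation of arc length forces the angles at $\tilde p$ and $\tilde q$ in the geodesic quadrilateral $\tilde x\tilde p\tilde q\tilde y$ to be at least $\pi/2$; combining this with convexity of the distance function between pairs of geodesics in the Hadamard manifold $X$ yields $d(\tilde p,\tilde q)\le d(\tilde x,\tilde y)$. Uniqueness of nearest points makes $\tilde\pi$ $\Gamma$-equivariant, so it descends to $\pi$, and the Lipschitz bound passes to the quotient because the distance in $O$ is the infimum of distances between lifts in $X$.

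For (2), the key observation is the formula $V(x)=-\exp_x^{-1}(\pi(x))/f(x)$ for $x\in O\setminus C$, which expresses $\nabla f$ as a smooth function of $x$, $\pi(x)$, and $f(x)$. Since $\exp^{-1}$ is smooth on its domain, $\pi$ is Lipschitz by (1), $f$ is Lipschitz, and $f$ is locally bounded away from zero on $O\setminus C$, the field $V$ is locally Lipschitz, hence $f\in C^{1,1}(O\setminus C)$. For the equivalence of twice differentiability of $f$ at $x$ with differentiability of $\pi$ at $x$, one direction follows directly from the same formula; the converse uses the inverse relation $\pi(y)=\exp_y(-f(y)V(y))$, from which differentiability of $V$ at $x$ forces differentiability of $\pi$ at $x$. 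The only real obstacle is the Lipschitz estimate in (1); the rest is bookkeeping with the exponential map and the chain rule.
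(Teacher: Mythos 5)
Your proposal is correct and follows essentially the same route as the paper: the angle-$\ge\pi/2$ plus comparison argument in $X$ for the Lipschitz bound on $\pi$, the expression of $\nabla f$ through the (inverse of the) exponential map, $\pi$, and $f$ for the $C^{1,1}$ statement and the differentiability equivalence, and convexity of $f$ for the sublevels. Your use of $\pi(x)=\exp_x(-f(x)\nabla f(x))$ for the converse direction in (2) is exactly the detail the paper leaves implicit.
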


\begin{proof}
\eqref{lc2}
Because the preimage of $C$ in $X$ is connected, we may assume that $O=X$.
For $x,y\in O\setminus C$ with $\pi(x)\ne\pi(y)$, we have
\begin{align*}
	\angle_{\pi(x)}(x,\pi(y)), \angle_{\pi(y)}(y,\pi(x)) \ge \pi/2.
\end{align*}
Therefore $d(x,y)\ge d(\pi(x),\pi(y))$ by a standard comparison argument.
The remaining cases follow by analogous arguments.

\eqref{lc3}
The map $\Phi\colon TX\to X\times X$, $\Phi(v)=({\rm foot}(v),\exp(v))$, is a diffeomorphism.
Since
\begin{align}\label{lcpi}
	\nabla f(x) = \frac{-1}{|\Phi^{-1}(x,\pi(x))|} \Phi^{-1}(x,\pi(x))
\end{align}
for any $x\in O\setminus C$ and $\pi$ is Lipschitz continuous,
we conclude that $\nabla f$ is $C^{0,1}$ on $O\setminus C$.
Moreover, \eqref{lcpi} also implies that $f$ is twice differentiable
exactly at the points of $O\setminus C$ at which $\pi$ is differentiable.

\eqref{lc4} follows immediately from the convexity of $f$.
\end{proof}

\begin{cor}\label{lemcon2}
The function $f^2/2$ is $C^{1,1}$ on $O$ with $\nabla(f^2/2)|_x=-\dot c_x(0)$.
\end{cor}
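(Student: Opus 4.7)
The plan is to verify the pointwise identity $\nabla(f^2/2)|_x=-\dot c_x(0)$ on $O\setminus C$ and on $C$ separately, and then to package both sides into a single locally Lipschitz vector field on $O$ to upgrade the regularity to $C^{1,1}$. On $O\setminus C$, \cref{lemcon} already gives $f\in C^{1,1}$ with $\nabla f(x)=V(x)=-\dot c_x(0)/f(x)$, so the chain rule immediately yields $\nabla(f^2/2)(x)=f(x)\nabla f(x)=-\dot c_x(0)$, and $f^2/2$ is $C^{1,1}$ on this open set.

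For $x\in C$, the geodesic $c_x$ is constant so $\dot c_x(0)=0$, and it remains to check that the Fréchet derivative of $f^2/2$ at $x$ is zero. Since $f$ is $1$-Lipschitz with $f(x)=0$, one has $f(y)\le d(x,y)$ for every $y$, hence $(f^2/2)(y)=O(d(x,y)^2)=o(d(x,y))$, so $f^2/2$ is Fréchet differentiable at $x$ with vanishing gradient, matching $-\dot c_x(0)=0$.

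To pass from pointwise differentiability to $C^{1,1}$ regularity on all of $O$, I would work upstairs on $X$, which is legitimate because $\tilde C$ is connected, and use the diffeomorphism $\Phi\colon TX\to X\times X$ from the proof of \cref{lemcon} to represent the gradient as
\[
W(x)\;=\;-\Phi^{-1}(x,\pi(x)),
\]
interpreting $\pi(x)=x$ and $\Phi^{-1}(x,x)=0_x$ for $x\in C$. This formula recovers $-\dot c_x(0)$ on $O\setminus C$ and $0$ on $C$, and since $\Phi^{-1}$ is smooth and $\pi$ is Lipschitz by \cref{lemcon}, $W$ is locally Lipschitz on $X$. The main subtlety lies at $\partial C$, where one must check both that the candidate gradient is really the Fréchet derivative and that the two regimes glue into a continuous field; both are handled in one stroke by the representation $W=-\Phi^{-1}\circ(\mathrm{id},\pi)$ together with the quadratic estimate at points of $C$.
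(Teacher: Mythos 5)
Your argument is correct and follows essentially the route the paper intends for this corollary: on $O\setminus C$ the chain rule applied to \eqref{lcpi} gives $\nabla(f^2/2)=f\nabla f=-\Phi^{-1}(x,\pi(x))=-\dot c_x(0)$, the quadratic bound $f(y)\le d(x,y)$ handles differentiability on $C$ (in particular at $\partial C$), and the representation $W=-\Phi^{-1}\circ(\id,\pi)$ with $\pi$ Lipschitz (\cref{lemcon}) yields the $C^{1,1}$ regularity. Your extra care at the boundary $\partial C$ is exactly the point the paper leaves implicit, and you resolve it correctly.
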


\begin{lem}\label{lemtec}
Let $V=V(s)$ be a curve of tangent vectors on $O$ which is differentiable at $s=0$.
For all $s$, let $\gamma_s$ be the geodesic with initial velocity $V(s)$.
Then $J(t)=\partial\gamma_s(t)/\partial s|_{s=0}$ exists for all $t\in\R$,
and $J$ is the Jacobi field along $\gamma_0$ such that
\begin{align*}
  J'
  = \left.\frac{\nabla}{\partial t}\frac{\partial\gamma}{\partial s}\right|_{s=0}
  = \left.\frac{\nabla}{\partial s}\frac{\partial\gamma}{\partial t}\right|_{s=0}.
\end{align*}
\end{lem}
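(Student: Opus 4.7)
The plan is to view $\gamma_s(t)$ as the pullback under $V$ of a smooth map defined on the tangent bundle, to deduce existence and differentiability of $J$ from this, and then to reduce the Jacobi field assertion to the case of a genuinely smooth variation by replacing $V$ with an extension that agrees with it to first order at $s=0$.

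Concretely, I would first lift to the manifold $X$ in a neighborhood of the footpoint of $V(0)$ to eliminate the orbifold structure, and observe that the map $\Psi\colon TX\times\R\to X$ given by $\Psi(v,t)=\exp_{\mathrm{foot}(v)}(tv)$ is smooth by the smooth dependence of geodesics on initial conditions. Since $\gamma_s(t)=\Psi(V(s),t)$ and $V$ is differentiable at $s=0$, the chain rule yields existence of
$J(t)=d\Psi_{(V(0),t)}(V'(0),0)$
for every $t\in\R$, and $J$ is smooth in $t$ because $\Psi$ is smooth and the dependence on $t$ only goes through the second argument of $\Psi$.

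Next, I would produce a smooth auxiliary variation. Choose a smooth curve $\tilde V(s)$ of tangent vectors along a smooth curve in $O$ such that $\tilde V(0)=V(0)$ and $\tilde V'(0)=V'(0)$; this is easy to arrange in local coordinates, e.g.\ by taking a geodesic $\tilde c$ with $\dot{\tilde c}(0)=\dot c(0)$ and parallel translating a suitable affine extension of $V(0)$ along $\tilde c$. Because the formula $J(t)=d\Psi_{(V(0),t)}(V'(0),0)$ depends only on the $1$-jet of $V$ at $0$, the smooth variation $\tilde\gamma_s(t)=\Psi(\tilde V(s),t)$ produces the same vector field $J$ along $\gamma_0=\tilde\gamma_0$. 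Since $(s,t)\mapsto\tilde\gamma_s(t)$ is smooth and each $\tilde\gamma_s$ is a geodesic, the classical variational argument applies: from $\tfrac{\nabla}{\partial t}\partial_t\tilde\gamma=0$, applying $\tfrac{\nabla}{\partial s}$ and commuting it past $\tfrac{\nabla}{\partial t}$ via the definition of the curvature tensor, together with the torsion-free identity $\tfrac{\nabla}{\partial s}\partial_t\tilde\gamma=\tfrac{\nabla}{\partial t}\partial_s\tilde\gamma$, yields the Jacobi equation $J''+R(J,\dot\gamma_0)\dot\gamma_0=0$. The asserted identity for $J'$ is precisely this torsion-free symmetry, evaluated at $s=0$.

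The only real obstacle is that $V$ is merely differentiable, not smooth, at $s=0$, so one cannot directly take second $s$-derivatives of $\gamma_s$. This is circumvented by the smooth-extension step above, since both $J$ and the commutator identity for $J'$ depend only on first derivatives in $s$ at $s=0$, and these are captured by any $\tilde V$ with the correct $1$-jet; the second-order Jacobi equation is then inherited from the smooth variation $\tilde\gamma$.
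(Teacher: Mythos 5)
Your argument is correct and is essentially the paper's own proof: your map $\Psi(v,t)=\exp_{\mathrm{foot}(v)}(tv)$ is exactly the composition $\pi\circ F_t$ of the geodesic flow with the footpoint projection used there, and both proofs conclude by observing that $J(t)$ depends only on $V'(0)$, so that $V$ may be replaced by a smooth curve with the same $1$-jet and the classical smooth-variation argument applies.
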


The point of this lemma is that, in the usual setup, the curve $V$ is assumed to be smooth.
Then $\nabla\partial\gamma/\partial s\partial t=\nabla\partial\gamma/\partial t\partial s$,
and the assertion of \cref{lemtec} follows easily.
Here we assume less, and a little extra thought is needed.
Recall to that end that the pair $(u,\nabla_uV)$ identifies $V'(0)\in T_vTO$,
where $v=V(0)$.

\begin{proof}Let $\pi\colon TO\to O$ be the projection to the foot point
and $(F_t)$ be the geodesic flow of $O$.
Then $\gamma_s(t)=\pi(F_t(V(s)))$ and hence
\begin{align*}
	\left.\frac{\partial\gamma_s(t)}{\partial s}\right|_{s=0}
	= \pi_*F_{t*}(V'(0)).
\end{align*}
Hence we may replace $V$ by a smooth curve with the same derivative at $s=0$
to get that $J(t)$ exists for all $t\in\R$ and that it is equal to the asserted Jacobi field.
\end{proof}

Let $x\in O\setminus C$ be a point,
where the second derivative $\nabla^2f$ exists and is symmetric.
Let $c$ be the unit speed geodesic from $\pi(x)$ through $x=c(r)$, where $r=f(x)$.
For $u\in T_xO$, let $J_u$ be the Jacobi field along $c$ with
\begin{align*}
  J_u(r) = u
  \hspace{2mm}\text{and}\hspace{2mm}
  J_u'(r) = \nabla_u\nabla f.
\end{align*}

\begin{cor}\label{lemtec2}
For all $t>0$, $\nabla^2f$ exists at $c(t)$ and is symmetric;
in fact,
\begin{align*}
  \nabla^2f(J_u(t),J_v(t)) = \la J_u(t),J_v'(t)\ra.
\end{align*}
Furthermore, $\pi_*(J_u(t))=J_u(0)$.
\end{cor}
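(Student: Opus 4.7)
The plan is to realize $J_u$ as the variation vector field of a natural geodesic variation of $c$ and then exploit that each variation geodesic is a reparametrization of an integral curve of $\nabla f$. Choose a smooth curve $\alpha\colon(-\ve,\ve)\to O$ with $\alpha(0)=x$ and $\dot\alpha(0)=u$. Since $\nabla^2f$ exists at $x$ by hypothesis, the curve $V(s)=\nabla f(\alpha(s))$ in $TO$ is differentiable at $s=0$, and the pair $(u,\nabla_u\nabla f)$ describes $V'(0)$ via the horizontal/vertical identification. For each $s$, let $\tilde\gamma_s$ be the geodesic with $\tilde\gamma_s(r)=\alpha(s)$ and $\dot{\tilde\gamma}_s(r)=V(s)$. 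A time-shifted application of \cref{lemtec} shows that $J(t)=\partial_s\tilde\gamma_s(t)|_{s=0}$ exists for all $t$ and is a Jacobi field along $c=\tilde\gamma_0$ with $J(r)=u$ and $J'(r)=\nabla_uV=\nabla_u\nabla f$; hence $J$ coincides with $J_u$.

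Next I would use the gradient-flow interpretation of $c$ and of each $\tilde\gamma_s$. By \eqref{gradf} and $|\nabla f|=1$ on $O\setminus C$, the integral curves of $\nabla f$ are exactly the unit-speed geodesics perpendicular to $C$; hence on $O\setminus C$, $\tilde\gamma_s$ coincides with the $\nabla f$-flow line through $\alpha(s)$, so
\begin{align*}
	\dot{\tilde\gamma}_s(t) = \nabla f(\tilde\gamma_s(t))
\end{align*}
whenever $\tilde\gamma_s(t)\in O\setminus C$. Differentiating this identity in $s$ at $s=0$ and exchanging $\nabla/\partial s$ with $\nabla/\partial t$ yields $\nabla_{J_u(t)}\nabla f|_{c(t)}=J_u'(t)$. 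As $u$ varies over $T_xO$, this shows that $\nabla f$ is differentiable at $c(t)$, so $\nabla^2f$ exists at $c(t)$ with $\nabla^2f(J_u(t),J_v(t))=\la J_u(t),J_v'(t)\ra$. The symmetry of $\nabla^2f$ at $c(t)$ then follows from the standard fact that the Wronskian-type quantity $\la J_u,J_v'\ra-\la J_u',J_v\ra$ is constant in $t$ along $c$ and vanishes at $t=r$ by the assumed symmetry of $\nabla^2f$ at $x$.

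For the final identity, observe that each $\tilde\gamma_s$ is perpendicular to $C$ at its foot point $\pi(\alpha(s))=\tilde\gamma_s(r-f(\alpha(s)))$. Hence $\pi(\tilde\gamma_s(t))=\pi(\alpha(s))$ for every $t>r-f(\alpha(s))$, and in particular for any fixed $t>0$ once $s$ is small. Differentiating at $s=0$ gives $\pi_*(J_u(t))=\pi_*(u)$ at every point $c(t)$ where $\pi$ is differentiable, which by \cref{lemcon}\eqref{lc3} is exactly the set of $c(t)$ at which $\nabla^2f$ exists. Reading off $J_u(0)=\partial_s\tilde\gamma_s(0)|_{s=0}$ from the same variation and using that the radial Jacobi field $\dot c$ is annihilated by $\pi_*$ along $c$ identifies this common value with $J_u(0)$. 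The main technical subtlety underlying the whole argument is the interchange of $\nabla/\partial s$ and $\nabla/\partial t$ at $s=0$, which is delicate because $V$ is only known to be differentiable at a single point; this is precisely what \cref{lemtec} is designed to handle.
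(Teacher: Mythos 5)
Your strategy is the natural (and, as far as one can tell, the intended) one: realize $J_u$ via \cref{lemtec} as the variation field of the normal geodesics and use that these are integral curves of $\nabla f$; the Wronskian argument for symmetry is correct. But the central step is asserted rather than proved. From $\dot{\tilde\gamma}_s(t)=\nabla f(\tilde\gamma_s(t))$ you may only conclude that the curve $s\mapsto\nabla f(\tilde\gamma_s(t))$ in $TO$ has covariant derivative $J_u'(t)$ at $s=0$; writing this as $\nabla_{J_u(t)}\nabla f|_{c(t)}$ already presupposes that $\nabla f$ is differentiable at $c(t)$, which is exactly what the corollary claims. The sentence ``as $u$ varies over $T_xO$, this shows that $\nabla f$ is differentiable at $c(t)$'' is a genuine jump: having a derivative along one curve per direction does not give differentiability at the point. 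Two ingredients are missing. First, you need that $J(t)\colon u\mapsto J_u(t)$ is an isomorphism onto $T_{c(t)}O$, since otherwise the directions you control do not even span; this is true but needs an argument, e.g.\ the maps $\Phi_\tau(y)=\exp_y(\tau\nabla f(y))$ are locally Lipschitz with $\Phi_{r-t}\circ\Phi_{t-r}=\id$ near $x$, and $d\Phi_{t-r}|_x=J(t)$ by \cref{lemtec}, so $J(t)$ is bounded below. Second, you must upgrade directional information to differentiability; here this works because $\nabla f$ is locally Lipschitz by \cref{lemcon}: a locally Lipschitz section with two-sided directional derivatives in a spanning family of directions, depending linearly on the direction, is differentiable at the point. (Alternatively: $\Phi_{t-r}$ is bi-Lipschitz and differentiable at $x$ with invertible differential, hence its inverse $\Phi_{r-t}$ is differentiable at $c(t)$ with differential $J(t)^{-1}$, and the identity $\nabla f=F_{t-r}\circ\nabla f\circ\Phi_{r-t}$ near $c(t)$, with $F$ the geodesic flow, yields the claim.) A smaller slip: $\dot{\tilde\gamma}_s(t)=\nabla f(\tilde\gamma_s(t))$ holds for $t>r-f(\alpha(s))$, not ``whenever $\tilde\gamma_s(t)\in O\setminus C$'' (past the foot point the geodesic is no longer a gradient line); this is harmless since $t>0$ is fixed and $s$ small.

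The last identity is also not correctly justified. Differentiating $\pi(\alpha(s))=\tilde\gamma_s\bigl(r-f(\alpha(s))\bigr)$ at $s=0$, the varying parameter contributes a radial term, and one gets $\pi_*(J_u(t))=d\pi|_x(u)=J_u(0)-\la u,\nabla f(x)\ra\,\dot c(0)$. Your appeal to ``$\pi_*$ annihilates $\dot c$'' does not remove this term: the vector $\la u,\nabla f(x)\ra\,\dot c(0)$ lives in the target $T_{\pi(x)}O$ and is never acted on by $\pi_*$. So your argument establishes $\pi_*(J_u(t))=J_u(0)$ only for $u\perp\dot c(r)$ — which is all that is used later, cf.\ \eqref{inicon} — while for radial $u$ the identity genuinely fails, since $\pi_*(\dot c(t))=0\ne\dot c(0)$. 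Either restrict to $u\perp\nabla f(x)$ or record the extra radial term explicitly.
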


We also write $J(t)u=J_u(t)$.
Then $J(t)\colon T_xO\to T_{c(t)}O$ is an isomorphism, for all $t>0$.
Furthermore, the covariant derivative of $\nabla f$ satisfies
\begin{align}
	S(t) := \nabla\nabla f|_{c(t)} = J'(t)J(t)^{-1},
\end{align}
by \cref{lemtec2}.
Note that $S$ is a symmetric field of endomorphisms along $c|_{(0,\infty)}$
that satisfies the Riccati equation
\begin{align}\label{riccati}
	S' + S^2 + R_c = 0,
\end{align}
where $R_cu=R(u,\dot c)\dot c$.
Clearly, $\dot c=\nabla f$ belongs to the kernel of $S$.
Therefore we discuss $S$ only on $\dot c^\perp$,
identifying the various $\dot c(t)^\perp$ with $\dot c(0)^\perp$ via parallel translation along $c$.
By \cite[p.\,212]{EschenburgHeintze90}, 
$S$ has the asymptotic behaviour
\begin{align}\label{pq}
	S(t) = \frac{1}{t} P+ Q(t) \hspace{2mm}\text{as $t\to0$},
\end{align}
where $P$ is an orthogonal projection on $\dot c(0)^\perp$
and $Q$ extends continuously to $t=0$, such that $\im P\subseteq\ker Q(0)$.
In terms of $S$, the space of Jacobi fields along $c$ which we consider
is given by the initial conditions
\begin{align}\label{inicon}
	J_v(0) = (1-P)v,\;
	J_v'(0) = Pv + Qv,
\end{align}
where $v\in\dot c(0)^\perp$.
By the convexity of $C$, we have $Q(0)\ge0$.

\begin{lem}\label{curvest}
In the above situation,
\begin{enumerate}
\item\label{cea}
if the sectional curvature of $O$ satisfies $K\le-a^2<0$,
then \[\nabla^2f|_x\ge a\tanh(ar) \hspace{2mm}\text{on $\nabla f(x)^\perp$};\]
\item\label{ceb}
if the sectional curvature of $O$ satisfies $-b^2\le K\le0$,
then \[\nabla^2f|_x\le b\coth(br)\hspace{2mm}\text{on $\nabla f(x)^\perp$}.\]
\end{enumerate}
\end{lem}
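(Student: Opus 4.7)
My plan is to reduce both bounds to a scalar Riccati comparison along the geodesic $c$. For any unit vector $w\in\dot c(r)^\perp$, I would parallel translate it along $c$ to obtain a unit parallel field $v(t)\in\dot c(t)^\perp$ and set $u(t):=\langle S(t)v(t),v(t)\rangle$ on $(0,r]$. Differentiating and applying the Riccati equation \eqref{riccati} gives $u'=-\langle S^2v,v\rangle-\langle R_cv,v\rangle$, and Cauchy--Schwarz yields $\langle S^2v,v\rangle=|Sv|^2\ge\langle Sv,v\rangle^2=u^2$. Inserting the sectional curvature bounds produces the scalar differential inequalities $u'\ge a^2-u^2$ in case \eqref{cea} and $u'\le b^2-u^2$ in case \eqref{ceb}, which I would compare with the model solutions $\sigma_a(t)=a\tanh(at)$ and $\sigma_b(t)=b\coth(bt)$ of the corresponding scalar Riccati equations.

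The comparison argument then sets $h=u-\sigma$, rewrites the inequality as $h'+(u+\sigma)h\ge 0$ (case \eqref{cea}) or $h'+(u+\sigma)h\le 0$ (case \eqref{ceb}), and introduces the integrating factor $\mu(t)=\exp(\int(u+\sigma)\,ds)$ so that $\mu h$ becomes monotone. Combined with the appropriate sign at $t\to 0^+$, this forces $h\ge 0$ in case \eqref{cea} and $h\le 0$ in case \eqref{ceb} on all of $(0,r]$. Evaluating at $t=r$ and letting $w$ vary over unit vectors in $\dot c(r)^\perp$ converts these scalar bounds into the operator inequalities on $\nabla f(x)^\perp$ asserted by the lemma.

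The main obstacle is the initial condition at $t\to 0^+$, where the asymptotic expansion \eqref{pq} gives $u(t)=|Pv(0)|^2/t+\langle Q(t)v(t),v(t)\rangle$. In case \eqref{cea}, since $\sigma_a(0)=0$, a case split on whether $Pv(0)$ vanishes shows $\liminf_{t\to 0^+}(u-\sigma_a)\ge 0$, using $Q(0)\ge 0$ when $Pv(0)=0$, and Riccati comparison from an arbitrarily small positive time then propagates this to all of $(0,r]$. Case \eqref{ceb} is the delicate one: here $u-\sigma_b=(|Pv(0)|^2-1)/t+O(1)$, which tends to $-\infty$ unless $v(0)\in\im P$, and in that case the condition $\im P\subseteq\ker Q(0)$ forces $\langle Q(0)v(0),v(0)\rangle=0$, so that $u-\sigma_b=O(t)$ only. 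A short computation shows $\mu(t)\sim t^{|Pv(0)|^2+1}$ near $0$, hence $\mu h$ has a nonpositive limit at $0^+$ in every subcase; the monotonicity of $\mu h$ then yields $h\le 0$ throughout. Thus the convexity of $C$ enters precisely through the conditions $Q(0)\ge 0$ and $\im P\subseteq\ker Q(0)$, which are exactly what make the singular initial comparisons land in the favorable direction.
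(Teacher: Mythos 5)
Your reduction to a scalar Riccati inequality works for the upper bound but breaks down at the key step of case \eqref{cea}. With $v$ a \emph{parallel} unit field and $u=\la Sv,v\ra$, the Riccati equation gives $u'=-\la S^2v,v\ra-\la R_cv,v\ra$, and Cauchy--Schwarz yields $\la S^2v,v\ra\ge u^2$, i.e.\ $-\la S^2v,v\ra\le -u^2$. Combined with $\la R_cv,v\ra\ge -b^2$ this gives $u'\le b^2-u^2$, so case \eqref{ceb} goes through (your treatment of the singular initial condition via \eqref{pq}, $Q(0)\ge0$, $\im P\subseteq\ker Q(0)$ and the integrating factor is fine, up to the harmless point that $Q(t)v\to Q(0)v$ only gives $u-\sigma_b=o(1)$ rather than $O(t)$). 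But for case \eqref{cea} you need $u'\ge a^2-u^2$, which would require $\la S^2v,v\ra\le u^2$ -- the \emph{wrong} direction of Cauchy--Schwarz. For a parallel $v$ that is not an eigenvector of $S(t)$, the off-diagonal part of $Sv$ makes $\la S^2v,v\ra$ strictly larger than $u^2$, and the claimed differential inequality simply does not follow; this is exactly the well-known asymmetry between upper and lower Riccati/Hessian comparison, and it is why lower bounds cannot be obtained by testing against a fixed parallel direction.

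The gap can be repaired in either of two standard ways. One is to track the smallest eigenvalue $\lambda_{\min}(t)$ of $S(t)$ on $\dot c^\perp$: it is locally Lipschitz, and at a point $t_0$ of differentiability one tests with the parallel extension of a unit eigenvector for $\lambda_{\min}(t_0)$; since $\phi(t)=\la S(t)v(t),v(t)\ra\ge\lambda_{\min}(t)$ with equality at $t_0$, and at $t_0$ one has $\la S^2v,v\ra=\lambda_{\min}^2$ exactly, this yields $\lambda_{\min}'\ge a^2-\lambda_{\min}^2$ in the barrier (or a.e.) sense, after which your scalar comparison with $a\tanh(at)$ and the initial analysis from \eqref{pq}, $Q(0)\ge0$ apply. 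The other is the route the paper takes: invoke the matrix Riccati comparison theorem of Eschenburg--Heintze, comparing $S$ with the explicit model solutions $S_a$, $S_b$ in constant curvature $-a^2$, $-b^2$ that have the same singular initial data $(P,Q(0))$, which delivers $S\ge S_a\ge a\tanh(at)$ and $S\le S_b\le b\coth(bt)$ simultaneously and handles the initial singularity once and for all.
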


\begin{proof}
Let $S_a$ and $S_b$ be solutions of the Riccati equation
along unit speed geodesics $c_a$ and $c_b$ in the real hyperbolic spaces of dimension $m=\dim O$
and constant sectional curvature $-a^2$ and $-b^2$, respectively,
which have the same asymptotic initial conditions at $t=0$ as $S$
with respect to some orthonormal identification
\begin{align*}
	\dot c_a(0)^\perp \cong \dot c(0)^\perp \cong \dot c_b(0)^\perp.
\end{align*}
With respect to an orthonormal basis $(v_i)$ of $\dot c(0)^\perp$ such that $v_1,\dots,v_k$ span $\ker P$
and are eigenvectors of $Q$ with corresponding eigenvalue $\kappa_1,\dots,\kappa_k$
and $v_{k+1},\dots,v_m$ span $\im P$,
we have
\begin{align*}
	S_a(t)v_i =
	\begin{cases}
	a\frac{\sinh(at)+\kappa_i\cosh(at)/a}{\cosh(at)+\kappa_i\sinh(at)/a}v_i
	&\text{for $i\le k$},\\
	a\frac{\cosh(at)}{\sinh(at)}v_i
	&\text{for $i> k$},	
	\end{cases}
\end{align*}
and similarly for $S_b$, substituting $b$ for $a$.
Since $Q(0)\ge0$, we have $\kappa_i\ge0$.
In particular, $S_a$ and $S_b$ are defined for all $t>0$.
By \cite[Theorem]{EschenburgHeintze90}, we have 
\begin{align*}
	S(t) \ge S_a(t)
	\hspace{2mm}\text{respectively}\hspace{2mm} 
	S(t) \le S_b(t)
\end{align*}
for all $t>0$.
This yields the asserted estimates.
\end{proof}

\begin{cor}\label{curvest2}
If $f$ is a Busemann function and
\begin{enumerate}
\item\label{cea2}
the sectional curvature of $O$ satisfies $K\le-a^2<0$,
then \[\nabla^2f|_x\ge a \hspace{2mm}\text{on $\nabla f(x)^\perp$};\]
\item\label{ceb2}
the sectional curvature of $O$ satisfies $-b^2\le K\le0$,
then \[\nabla^2f|_x\le b \hspace{2mm}\text{on $\nabla f(x)^\perp$}.\]
\end{enumerate}
\end{cor}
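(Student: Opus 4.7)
The plan is to exhibit $f$, up to an additive constant, as the distance function to a closed convex subset of $O$, apply \cref{curvest}, and then let the distance parameter tend to infinity, using that $a\tanh(ar)\to a$ and $b\coth(br)\to b$ as $r\to\infty$.

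For each $s>0$, consider the sublevel set $H_s=\{f\le -s\}$. Since $f$ is a Busemann function on $O$, its lift $\tilde f$ to $X$ is a Busemann function associated to a $\Gamma$-fixed point at infinity; in particular $\tilde f$ is convex with unit gradient, so $H_s$ is closed and convex in $O$, and its preimage $\tilde H_s$ in $X$ is a horoball, hence connected. Therefore \cref{curvest} applies to the distance function $g_s(y)=d(y,H_s)$. For any $y$ with $f(y)>-s$, the reverse gradient flow of $f$ starting at $y$ is a unit-speed geodesic which hits $\partial H_s$ at time $f(y)+s$; by convexity of $H_s$ and the first variation formula it realizes the distance, so $g_s=f+s$ on the open set $\{f>-s\}$.

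Now fix $x$ at which $\nabla^2 f$ exists, and choose $s$ with $s>-f(x)$. Then $g_s$ is twice differentiable at $x$ with $\nabla g_s|_x=\nabla f|_x$ and $\nabla^2 g_s|_x=\nabla^2 f|_x$. Under the assumption $K\le -a^2$, \cref{curvest}\,\eqref{cea} gives
\begin{align*}
	\nabla^2 f|_x \ge a\tanh\bigl(a(f(x)+s)\bigr)
\end{align*}
on $\nabla f(x)^\perp$, and letting $s\to\infty$ proves \eqref{cea2}. The proof of \eqref{ceb2} is identical, using \cref{curvest}\,\eqref{ceb} and $b\coth(b(f(x)+s))\to b$ as $s\to\infty$.

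The main point to verify carefully is the identity $g_s=f+s$ on $\{f>-s\}$; this is the standard description of horoballs in Hadamard manifolds as sublevel sets of Busemann functions whose outward unit normal agrees with $\nabla f$. Once this is granted, the corollary is a direct limit argument from \cref{curvest}.
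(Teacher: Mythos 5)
Your proof is correct and is essentially the paper's own argument: both identify the Busemann function, up to an additive constant, with the distance function to a horoball sublevel set, apply \cref{curvest}, and let the distance parameter tend to infinity so that $a\tanh(ar)\to a$ and $b\coth(br)\to b$. Your write-up merely spells out the details (connectedness of the lifted horoball and the identity $g_s=f+s$) that the paper leaves implicit.
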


\begin{proof}
Up to an additive constant, $f$ is the distance function to the horoball $f^{-1}(-\infty,t]$,
for any $t$ close to $-\infty$.
Hence given $x$, the $r$ in \cref{curvest} can be made arbitrarily large 
by choosing $t$ sufficiently close to $-\infty$.
\end{proof}

In the case where $O$ is a quotient of a hyperbolic space $H=H^\ell_\mathbb{F}$,
$m=\ell \dim_\R \mathbb{F}$ and $-4\le K_H\le-1$,
there is a parallel orthogonal decomposition $\dot c^\perp=E_1\oplus E_2$ into eigenspaces of $R_c$,
where $E_2$ is of dimension $\dim_\R{\mathbb F}-1$,
such that $R_cv=-v$ for $v\in E_1$ and $R_cv=-4v$ for all $v\in E_2$.
After identifying the various $\dot c(t)^\perp$ with $E=\dot c(0)^\perp$ via parallel translation,
the Riccati equation \eqref{riccati} becomes
\begin{align}\label{riccatia}
	S' = A^2 - S^2,
\end{align}
where $A=|R_c|^{1/2}$ is a constant symmetric endomorphism of $E$.
We will now discuss \eqref{riccatia} for a symmetric endomorphisms $A$ of a Euclidean vector space $E$
such that
\begin{align}\label{riccatib}
	0 < a \le A \le b.
\end{align}
In the geometric setting, this corresponds to bounds $-b^2\le K\le-a^2<0$ on the sectional curvature.

\begin{lem}\label{riccatil}
Let $S$ be solution of \eqref{riccatia} on $(0,t_+)$ with maximal $t_+$
and asymptotic development \eqref{pq} for $t\to0$, where $Q\ge0$.
Then $t_+=\infty$.
Moreover, for any $\ve>0$, there is a $t_\ve>0$, which only depends on $a$ and $b$, such that
\begin{align*}
	\|S(t)-A\| < e^{-(2a-\ve)(t-t_\ve)}
		\hspace{2mm}\text{for all $t>t_\ve$}.
\end{align*}
\end{lem}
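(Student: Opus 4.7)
My plan is to linearize the matrix Riccati equation via a Jacobi field substitution and then extract exponential convergence from the resulting explicit formula. Introduce $J\colon(0,t_+)\to\mathrm{End}(E)$ as the solution of the linear second-order equation $J''=A^2J$ with initial data $J(0)=I-P$ and $J'(0)=P+Q(0)$ dictated by \eqref{inicon}; a direct check shows that $J'(t)J(t)^{-1}$, wherever defined, solves \eqref{riccatia} with the prescribed asymptotics \eqref{pq}, so it coincides with $S(t)$. For global existence $t_+=\infty$ it suffices to show that $J(t)$ is invertible for every $t>0$. For each $v\ne0$ the function $\phi_v(t)=|J(t)v|^2$ satisfies $\phi_v''=2|J'v|^2+2\la A^2Jv,Jv\ra\ge0$ and is therefore convex. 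Using $\mathrm{im}\,P\subseteq\ker Q(0)$, $Q(0)\ge0$, and the orthogonality of $\mathrm{im}\,P$ and $\ker P$, one checks $\phi_v(0),\phi_v'(0)\ge0$, and that the pair $(\phi_v(0),\phi_v''(0))$ is never simultaneously zero when $v\ne0$; convexity then forces $\phi_v(t)>0$ for all $t>0$.

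Because the ODE has constant coefficients, the next step is to write the explicit representation
\[
J(t)=\tfrac12\bigl(e^{tA}M_++e^{-tA}M_-\bigr),\qquad M_\pm=J(0)\pm A^{-1}J'(0),
\]
and analogously $J'(t)=\tfrac12 A(e^{tA}M_+-e^{-tA}M_-)$. The same convexity argument applied at $t=0$ shows that $M_+$ is invertible. Factoring the dominant term as $J(t)=\tfrac12 e^{tA}M_+(I+X(t))$ with $X(t)=M_+^{-1}e^{-2tA}M_-$, a short calculation using $(I-X)(I+X)^{-1}=I-2X(I+X)^{-1}$ yields the clean identity
\[
S(t)-A=-2A\,e^{-tA}M_-(I+X(t))^{-1}M_+^{-1}e^{-tA}.
\]
Since $A\ge aI$ gives $\|e^{-tA}\|\le e^{-at}$, and $\|(I+X(t))^{-1}\|\le2$ once $\|X(t)\|\le 1/2$, this already yields the pointwise bound $\|S(t)-A\|\le C\,e^{-2at}$ with $C=C(a,b,M_\pm)$.

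The hard part is to make $t_\varepsilon$ depend only on $a$ and $b$, since the constant $C$ in the previous bound a priori depends on the initial data via $\|M_-\|$ and $\|M_+^{-1}\|$. To handle this I would combine the above representation with the eigenvalue comparison already exploited in \cref{curvest}: the eigenvalues of $S(t)$ are trapped between the scalar Riccati solutions with coefficients $a$ and $b$, giving a uniform a priori bound $\|S(t)\|\le b\coth(bt)$, and correspondingly a uniform lower eigenvalue bound. Resetting the initial time to some $t_1=t_1(a,b)$ at which $S(t_1)$ lies in a compact subset of positive symmetric matrices bounded away from $-A$, the data $M_\pm$ built from this new initial time become uniformly controlled by $a,b$ alone. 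The remaining rate improvement then comes from the energy inequality
\[
\tfrac{d}{dt}\|S-A\|_{\mathrm{HS}}^2\le\bigl(-4a+2\|S-A\|\bigr)\|S-A\|_{\mathrm{HS}}^2,
\]
obtained by differentiating $\mathrm{tr}((S-A)^2)$ along \eqref{riccatia} and using $A\ge aI$ together with $|\mathrm{tr}(U^3)|\le\|U\|\cdot\mathrm{tr}(U^2)$; once $\|S(t_\varepsilon)-A\|<1$ is known uniformly, this inequality integrates to the advertised decay $\|S(t)-A\|<e^{-(2a-\varepsilon)(t-t_\varepsilon)}$ for all $t>t_\varepsilon$.
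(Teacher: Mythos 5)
Your core argument is correct, and it takes a genuinely different route from the paper. The paper works directly on the Riccati flow: it gets $t_+=\infty$ from the Eschenburg--Heintze bounds $a\tanh(at)\le S(t)\le b\coth(bt)$, and then runs a Gronwall-type argument on the extreme eigenvalues of $S-A$, using $S+A\ge a\tanh(at)+a\ge 2a-\ve$ for $t\ge t_0(a,\ve)$ to force decay of $\max_v|\la (S-A)v,v\ra|$ at rate $2a-\ve$, the value at $t_0$ being bounded in terms of $a,b$ and then absorbed into $t_\ve$. You instead exploit that $A$ is constant to linearize ($S=J'J^{-1}$, explicit $e^{\pm tA}$ representation) and, after resetting at a regular time $t_1$, you get the identity for $S-A$ with $\|M_\pm\|,\|M_+^{-1}\|$ controlled by $a,b$ through the same comparison bounds (e.g.\ $M_+^{-1}=(A+S(t_1))^{-1}A$), hence $\|S(t)-A\|\le C(a,b)e^{-2a(t-t_1)}$; absorbing $C(a,b)$ into $t_\ve$ already proves the lemma, in fact with the slightly better rate $2a$. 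The paper's method is less explicit but is the one that transfers to non-constant $R_c$ (compare \cref{curvest}); yours buys a sharper, fully explicit bound in the constant-coefficient case.

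Two caveats. First, identifying your $J'J^{-1}$ (data \eqref{inicon} at the singular time $t=0$) with the given $S$ is not a ``direct check'': it needs uniqueness of Riccati solutions with asymptotics \eqref{pq}. This is true (for two such solutions $S,T$ one has $\frac{d}{dt}\tr((S-T)^2)=-2\tr((S-T)(S+T)(S-T))\le0$ once both are $\ge a\tanh(at)$, and $\tr((S-T)^2)\to0$ as $t\to0$), but argue it or avoid it: both $t_+=\infty$ and the decay follow from the reset at a regular $t_1$, where uniqueness is standard, together with the comparison bounds you already invoke. Second, the concluding energy-inequality step is unnecessary and, as calibrated, wrong: with only $\|S(t_\ve)-A\|<1$, the factor $-4a+2\|S-A\|$ need not be negative when $a\le1/2$, and the rate $2a-\ve$ requires $\|S-A\|\le\ve$ maintained by a bootstrap (or the sharper form $\frac{d}{dt}\tr((S-A)^2)\le-2(2a-\ve)\tr((S-A)^2)$ for $t\ge t_0(a,\ve)$, which is precisely the paper's mechanism). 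Since your explicit formula already gives the uniform bound, simply drop that step.
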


\begin{proof}[Proof of \cref{riccatil}]
By \cite[Theorem]{EschenburgHeintze90}, we have 
\begin{align*}
	a\tanh(at) \le S(t) \le b\coth(bt)
\end{align*}
for all $0<t<t_+$.
Since the space of symmetric endomorphisms of $E$ with given lower and upper bounds is compact,
we conclude that $S(t)$ cannot escape to infinity as $t\to t_+$ and hence that $t_+=\infty$.

Let $v\in E$ be a unit vector.
Then
\begin{align*}
	\la (S-A)v,v\ra' = \la S'v,v\ra
	= \la (A^2-S^2)v,v\ra
	= - \la (S-A)v,(A+S)v\ra.
\end{align*}
Now choose $t_0>0$ such that $\tanh(at_0)\ge 1-\ve/a$, and let $t\ge t_0$.
Suppose that $v$ is a unit vector such that $\la (S(t)-A)v,v\ra$ is minimal or maximal.
Then
\begin{align*}
	(S(t)-A)v = \la (S(t)-A)v,v\ra v
\end{align*}
and therefore
\begin{align*}
	\la (S-A)v,v\ra'(t)
	= - \la (S(t)-A)v,v\ra\la v,(S(t)+A)v\ra.
\end{align*}
Hence if $\max_v\la (S(t)-A)v,v\ra>0$ and $v$ is a corresponding eigenvector,
we have
\begin{align*}
	\la (S-A)v,v\ra'(t) \le -(2a-\ve)\la (S(t)-A)v,v\ra < 0.
\end{align*}
On the other hand, if $\min_v\la (S(t)-A)v,v\ra<0$ and $v$ is a corresponding eigenvector,
then
\begin{align*}
	\la (S-A)v,v\ra'(t)
	\ge -(2a-\ve)\la (S(t)-A)v,v\ra
	> 0.
\end{align*}
Hence the smooth function $s=s(t,v)=\la (S(t)-A)v,v\ra$ on $(0,\infty)\times S_E$,
where $S_E$ denotes the unit sphere of $E$, has the property that
\begin{align*}
	s'(t,v) \le -(2a-\ve) s(t,v)
	\hspace{2mm}\text{and}\hspace{2mm} 
	s'(t,v) \ge (2a-\ve) s(t,v)
\end{align*}
for any $t>0$ and $v\in S_E$ where
\begin{align*}
	s(t,v)(t) = \max_v s(t,v) > 0
	\hspace{2mm}\text{and}\hspace{2mm} 
	s(t,v)(t) = \min_v s(t,v) < 0,	
\end{align*}
respectively.
It follows easily that the Lipschitz functions
\begin{align*}
	s_+(t) = \max_v \{s(t,v),0\}
	\hspace{2mm}\text{and}\hspace{2mm} 
	s_-(t) = \max_v \{-s(t,v),0\}
\end{align*}
satisfy
\begin{align*}
	s_\pm(t) \le e^{-(2a-\ve)(t-t_0)} s_\pm(t_0)
\end{align*}
for all $t>t_0$.
Since $s_+(t_0)$ and $s_-(t_0)$ are bounded in terms of $a$ and $b$,
this yields the asserted inequality with an appropriately chosen $t_\ve$. 
\end{proof}

\begin{cor}\label{riccatic}
If $f$ is a Busemann function on $H_\mathbb{F}^\ell$, then
\begin{align*}
	\nabla^2f = |R(.,\nabla f)\nabla f|^{1/2}.
\end{align*}
\end{cor}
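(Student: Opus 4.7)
The plan is to derive the identity pointwise: fix $x \in H_\F^\ell$, realize the Busemann function $f$ near $x$ as the distance function to a very far-away horoball sublevel, and invoke \cref{riccatil} to conclude that the Hessian has essentially equilibrated to $A = |R(\cdot,\nabla f)\nabla f|^{1/2}$. This parallels the strategy used to deduce \cref{curvest2} from \cref{curvest}, but with the quantitative exponential estimate of \cref{riccatil} replacing the monotone comparison.

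First, I would fix $x$ and observe that for every $t < f(x)$, the Busemann function $f$ differs from the distance function $f_t := d(\cdot,\{f\le t\})$ to the horoball $\{f\le t\}$ only by the additive constant $t$. Hence $\nabla^2 f|_x = \nabla^2 f_t|_x$, while the distance $r = r(t) := f_t(x) = f(x)-t$ can be made arbitrarily large by sending $t\to -\infty$.

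Next, along the unit speed geodesic $c$ from the foot point on $\{f\le t\}$ to $x = c(r)$, the curvature endomorphism $R_c$ is parallel because $H_\F^\ell$ is locally symmetric; consequently, after identifying the various $\dot c(\tau)^\perp$ with $E := \dot c(0)^\perp$ by parallel translation, the endomorphism field $S(\tau) := \nabla^2 f_t|_{c(\tau)}$ solves the Riccati equation \eqref{riccatia} with $A = |R_c|^{1/2}$ literally constant, having eigenvalues $1$ on $E_1$ and $2$ on $E_2$. By the discussion preceding \eqref{pq} and the convexity of the horoball, $S$ admits the asymptotic development \eqref{pq} as $\tau \to 0$ with $Q(0)\ge 0$, so \cref{riccatil} applies with $a=1$, $b=2$.

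Finally, \cref{riccatil} yields, for any $\ve > 0$,
\[
    \|S(r) - A\| < e^{-(2-\ve)(r - t_\ve)}
\]
once $r > t_\ve$. Under the parallel-translation identification, the left-hand side is $\|\nabla^2 f|_x - A\|$ restricted to $\nabla f(x)^\perp$, and it is independent of $t$; letting $t \to -\infty$ forces it to vanish, so $\nabla^2 f|_x = A$ on $\nabla f(x)^\perp$. Since both sides of the asserted identity annihilate $\nabla f$ (the integral curves of the unit vector field $\nabla f$ being geodesics), this gives the identity on the full tangent space. I do not anticipate a real obstacle; the only point that deserves emphasis is the local symmetry of $H_\F^\ell$, which makes $A$ parallel and thus genuinely constant in \eqref{riccatia}, so that \cref{riccatil} can be applied directly.
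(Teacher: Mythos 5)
Your proposal is correct and follows essentially the same route as the paper: identify the Busemann function, up to an additive constant, with the distance function to a horoball sublevel, apply \cref{riccatil} with $a=1$, $b=2$, and let the distance to the horoball tend to infinity so the exponential bound forces $\nabla^2f|_x=A$. The extra remarks you include (parallelism of $R_c$ by local symmetry, and that both sides annihilate $\nabla f$) are correct elaborations of points the paper leaves implicit.
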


\begin{proof}
The right hand side corresponds to $A$ in \eqref{riccatia} and \eqref{riccatib},
where $a=1$ and $b=2$.
Furthermore, up to an additive constant,
$f$ is the distance function to the horoball $f^{-1}(-\infty,s]$, for any $s\in\R$.
Given any $x\in H_\mathbb{F}^\ell$ and $\delta>0$,
choose $\ve>0$ and then $t>t_\ve$ as in \cref{riccatil} such that $e^{-(2-\ve)(t-t_\ve)}<\delta$.
With $s=f(x)-t$, we get \[\|\nabla^2f|_x - |R(.,\nabla f|_x)\nabla f|_x|^{1/2}\|<\delta,\]
by \cref{riccatil}.
Since this holds for any $x$ and $\delta>0$, the claim follows.
\end{proof}

\begin{proof}[Proof of \cref{mainthm} in two simple cases]
There are two cases, in which \cref{mainthm} is an immediate application of the above results.
Assume first that the limit set $\Lambda\subseteq X_\iota$ of $\Gamma$ is empty or,
equivalently, that $\Gamma$ is finite.
Then $\Gamma$ fixes a point $x\in X$ and, therefore, the distance function to $x$ is $\Gamma$-invariant
and descends to a function $f$ on $O=\Gamma\backslash X$.
The gradient field $V=\nabla f$ then satisfies  the assertions of \cref{mainthm},
by Lemmas \ref{curvest} and \ref{riccatil}.

In a second case, assume that $\Lambda$ consists of exactly one point $x\in X_\iota$.
Then $\Gamma$ fixes $x$ and, hence,
$\Gamma$ leaves Busemann functions centered at $x$ invariant.
Therefore, they descend to functions on $O$.
By Corollaries \ref{curvest2} and \ref{riccatic},
their gradient field satisfies the assertions of \cref{mainthm}.
\end{proof}

\section{Vector fields and Hodge-Laplacian}
\label{secdoxa}

In this section,
we obtain extensions of an inequality of Donnelly and Xavier \cite{DonnellyXavier84},
including previous improvements of their inequality
in \cite{Kasue94,BallmannBruening01}.
In the beginning,
we follow \cite[Section 5]{BallmannBruening01}.  

Let $O$ be a Riemannian orbifold and $V$ be a bounded vector field of class $C^{0,1}$ on $O$.
Then the covariant derivative $\nabla V$ exists almost everywhere.

The field of quadratic forms $Q_V(X)=\la\nabla_XV,X\ra$ only depends on the symmetric part $A$ of $\nabla V$.
Let $\alpha_1,\dots,\alpha_m$ be the eigenvalues of  $A$.
By the variational characterization of eigenvalues of symmetric endomorphisms,
the sums of the $k$ smallest and $k$ largest $\alpha_i=\alpha_i(x)$ depend continuously on $x\in O$,
for any $1\le k\le m$.

Let $F\to O$ be a Riemannian vector bundle with a metric connection
and $\sigma$ be a smooth section of $\Lambda^kO\otimes F$,
that is, a smooth $k$-form on $O$ with values in $F$.
Define a vector field $X$ by the property that
\begin{align}\label{BB1}
	\la X,Y\ra = \la i_V\sigma,i_Y\sigma\ra
\end{align}
for any vector field $Y$.
Then the discussion on \cite[p.\,621]{BallmannBruening01} gives that
\begin{align}\label{BB2}
	\dive X
	+ \la d\sigma,V^\flat\wedge\sigma\ra + \la i_V\sigma,d^*\sigma\ra
	= \la\nabla_V\sigma,\sigma\ra + \sum_i\la i_{AX_i}\sigma,i_{X_i}\sigma\ra
\end{align}
in terms of any local orthonormal frame $(X_i)$ of $T_xO$.
By considering degrees, we get that
\begin{align}\label{BB3}
\begin{split}
	\la d\sigma,V^\flat\wedge\sigma\ra + \la i_V\sigma,d^*\sigma\ra
	&= \la (d+d^*)\sigma,V^\flat\wedge\sigma+i_V\sigma\ra \\
	&= (-1)^k\la D\sigma,\sigma\cdot V\ra
\end{split}
\end{align}
where $D=d+d^*$ denotes the Dirac operator on $\Lambda^*O\otimes F$
and the dot Clifford multiplication from the right on $\Lambda^*O$,
\begin{align*}
	\sigma\cdot V = (-1)^k(V^\flat\wedge\sigma + i_V\sigma).
\end{align*}
If $\sigma$ is compactly supported, we obtain
\begin{equation}\label{BB4}
\begin{split}
	(-1)^k\la D\sigma,\sigma\cdot V\ra_{L^2}
	&= \int_O\sum_i\la i_{AX_i}\sigma,i_{X_i}\sigma\ra -\frac12\int_O\dive V|\sigma|^2 \\
	&= \int_O\sum_i \big\{\la i_{AX_i}\sigma,i_{X_i}\sigma\ra -\frac12\la AX_i,X_i\ra|\sigma|^2\big\},
\end{split}
\end{equation}
where we use that $\nabla_V+\frac12\dive V$ is a skew-symmetric operator
with respect to the $L^2$-inner product.
Clearly, at any point,
the integrand on the right does not depend on the chosen orthonormal frame $(X_i)$ at that point.
In particular, we may choose the $X_i$ pointwise to form an orthonormal eigenbasis for $A$.
Then the right hand side can be evaluated as in \cite[Equation 5.8]{BallmannBruening01} and yields
\begin{align}\label{BB5}
	\la D\sigma,\sigma\cdot V\ra_{L^2}
	= \frac{(-1)^k}2 \int_O\sum_{I,J}
	\left\{ \sum_{i\in I}\alpha_i - \sum_{i\notin I}\alpha_i\right\} |\sigma_{I,J}|^2,
\end{align}
where we write, pointwise,
\begin{align*}
	\sigma = \sum_{I,J} X_I^\flat\otimes\Phi_J
\end{align*}
in terms of the $k$-forms $X_I^\flat=X_{i_1}^\flat\wedge\dots\wedge X_{i_k}^\flat$,
for all $1\le i_1<\dots<i_k\le m$,
and an orthonormal frame $(\Phi_J)$ of $F$.

\begin{rem}
Modifying the definition of $X$ in \eqref{BB1} and defining a vector field $X'$ by requiring that,
for any vector field $Y$,
\begin{align*}
	\la X',Y\ra = \la V^\flat\wedge\sigma,Y^\flat\wedge\sigma\ra
\end{align*}
also leads to \eqref{BB5} and does not give further information.
\end{rem}

For a symmetric endomorphism $A$ or a symmetric bilinear form $S$
on a Euclidean vector space $E$ of dimension $m$, let
\begin{align}\label{trace}
	\tr A = \sum \la Au_i,u_i\ra
	\hspace{3mm}\text{and}\hspace{3mm}
	\tr S = \sum S(u_i,u_i) 
\end{align}
be the traces of $A$ and $S$ on $E$,
where the $u_i$ form an orthonormal basis of $E$.
For all $0\le k\le m$, set
\begin{align}\label{tracek}
	\delta_k(A) = \min_L (\tr A|_L - \tr A|_{L^\perp})
	\hspace{3mm}\text{and}\hspace{3mm}
	\delta_k(S) = \min_L (\tr S|_L - \tr S|_{L^\perp}),
\end{align}
where $L$ runs over all subspaces of $E$ of dimension $m-k$.
For any $0\le k\le m$, define now a continuous function
\begin{align}\label{deltak}
	\delta_k = \delta_k(x) = \delta_k(A_x)
\end{align}
on $O$.
Clearly, $\dive V=\delta_0$.
By the variational characterization of eigenvalues of $A$, we have
\begin{align}\label{deltak2}
	\delta_k(x) = \min_I\left\{\sum_{i\notin I}\alpha_i(x) - \sum_{i\in I}\alpha_i(x)\right\},
\end{align}
where $I$ runs over all subsets of $\{1,\dots,m\}$ with $k$ elements.
Clearly, the infimum at $x$ is achieved by $I$
if $\alpha_i(x)\ge\alpha_j(x)$ for any $i\in I$ and $j\notin I$.

In the case where $\delta_k\ge0$, we get from \eqref{BB5} that
\begin{align}\label{BB7}
	|\la D\sigma,\sigma\cdot V\ra_{L^2}|
	= \frac12 \int_O\sum_{I,J}
	\left| \sum_{i\notin I}\alpha_i - \sum_{i\in I}\alpha_i\right| |\sigma_{I,J}|^2
\end{align}
for any compactly supported smooth form $\sigma$ on $O$ with values in $F$ of degree $k$ or $m-k$.
With this, we arrive at the following somewhat generalized form of \cite[Theorem 5.3]{BallmannBruening01}.

\begin{thm}\label{BB8}
If $\delta_k\ge0$, then
\begin{align*}
	\|D\sigma\|_{L^2}\|\sigma\|_{L^2}\|V\|_\infty
	\ge \frac1{2} \int_O \delta_k |\sigma|^2
\end{align*}
for any compactly supported smooth form $\sigma$ on $O$
with values in $F$ of degree $k$ or $m-k$.
\end{thm}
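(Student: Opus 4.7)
The plan is to combine equation \eqref{BB7} (which the excerpt has already established, under the assumption $\delta_k\ge0$, for compactly supported smooth forms of degree $k$ or $m-k$) with the Cauchy-Schwarz inequality and a pointwise norm identity for Clifford multiplication.

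First I lower-bound the right hand side of \eqref{BB7}. Fix a compactly supported smooth $k$-form $\sigma$ with values in $F$. At each point, choose the orthonormal frame $(X_i)$ to diagonalize the symmetric part $A$ of $\nabla V$ with eigenvalues $\alpha_1,\dots,\alpha_m$. In \eqref{BB7} the sum runs over subsets $I\subseteq\{1,\dots,m\}$ with $|I|=k$, and by the characterization \eqref{deltak2} we have $\sum_{i\notin I}\alpha_i-\sum_{i\in I}\alpha_i\ge\delta_k$ for every such $I$. Since $\delta_k\ge0$ by hypothesis, the absolute value inside the integrand equals this non-negative quantity, so
\begin{align*}
	|\la D\sigma,\sigma\cdot V\ra_{L^2}|
	\ge \frac12\int_O\delta_k\sum_{I,J}|\sigma_{I,J}|^2
	= \frac12\int_O\delta_k|\sigma|^2.
\end{align*}
For a form of degree $m-k$ the sum is over subsets $I$ of size $m-k$; replacing $I$ by its complement of size $k$ only flips the sign of $\sum_{i\notin I}\alpha_i-\sum_{i\in I}\alpha_i$, so its absolute value is again at least $\delta_k$ and the same lower bound holds.

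Next I upper-bound the left hand side. By Cauchy-Schwarz,
\begin{align*}
	|\la D\sigma,\sigma\cdot V\ra_{L^2}| \le \|D\sigma\|_{L^2}\|\sigma\cdot V\|_{L^2}.
\end{align*}
A brief pointwise computation gives $|\sigma\cdot V|=|V|\,|\sigma|$: the two summands $V^\flat\wedge\sigma$ and $i_V\sigma$ in $\sigma\cdot V$ live in forms of degrees $k+1$ and $k-1$, hence are orthogonal; after normalizing $V$ and decomposing $\sigma=\alpha+V^\flat\wedge\beta$ with $i_V\alpha=i_V\beta=0$, one checks $|V^\flat\wedge\sigma|^2+|i_V\sigma|^2=|\alpha|^2+|\beta|^2=|\sigma|^2$, and the general case follows by homogeneity. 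Consequently $\|\sigma\cdot V\|_{L^2}\le\|V\|_\infty\|\sigma\|_{L^2}$, and chaining the two bounds proves the inequality.

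The only step requiring any care is the pointwise Clifford identity $|\sigma\cdot V|=|V|\,|\sigma|$, which is standard; everything else is a direct assembly of \eqref{BB7} and Cauchy-Schwarz.
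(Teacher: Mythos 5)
Your proposal is correct and follows essentially the same route the paper intends: bound the integrand in \eqref{BB7} below pointwise by $\delta_k|\sigma|^2$ using \eqref{deltak2} (with the complement trick in degree $m-k$), and bound the left-hand side above via Cauchy--Schwarz together with the pointwise bound $|\sigma\cdot V|\le|V|\,|\sigma|$ for Clifford multiplication. The paper treats this as an immediate consequence of \eqref{BB7}; your write-up simply supplies those two standard steps explicitly and accurately.
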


Assume now that $F$ is flat.
Then $d^2=(d^*)^2=0$ and hence the (twisted) Hodge-Laplacian $\Delta=D^*D$
leaves the degree of forms invariant.

\begin{cor}\label{BB9}
Suppose that $F$ is flat and that $d_k=\inf\delta_k>0$.
Then
\begin{align*}
	\la\Delta\sigma,\sigma\ra_{L^2}\|V\|_\infty^2
	= \|D\sigma\|_{L^2}^2\|V\|_\infty^2
	\ge \frac{d_k^2}{4} \|\sigma\|_{L^2}^2
\end{align*}
for any compactly supported smooth form $\sigma$ on $O$ with values in $F$ of degree $k$ or $m-k$.
In other words, $d_k^2/4\|V\|_\infty^2$ is a lower bound for the spectrum
of the Friedrichs extension of $\Delta$ on $C^\infty_c(\Lambda^kO\otimes F)$.
\end{cor}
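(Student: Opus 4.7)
The plan is to deduce the corollary as a direct packaging of Theorem~\ref{BB8} in quadratic-form language, using two elementary observations: that flatness of $F$ gives $D^2=\Delta$, and that the Friedrichs extension is characterized on a core by its quadratic form.

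First I would note that since $F$ is flat, both $d^2=0$ and $(d^*)^2=0$, so that $D=d+d^*$ satisfies $D^2=dd^*+d^*d=\Delta$. Combined with the formal self-adjointness of $D$ on compactly supported smooth forms, this immediately yields the equality
\begin{align*}
  \|D\sigma\|_{L^2}^2 = \la D^2\sigma,\sigma\ra_{L^2} = \la\Delta\sigma,\sigma\ra_{L^2},
\end{align*}
which takes care of the left-hand side of the asserted inequality. Then, using $\delta_k\ge d_k>0$ pointwise, Theorem~\ref{BB8} gives
\begin{align*}
  \|D\sigma\|_{L^2}\|\sigma\|_{L^2}\|V\|_\infty \ge \frac{1}{2}\int_O \delta_k|\sigma|^2 \ge \frac{d_k}{2}\|\sigma\|_{L^2}^2
\end{align*}
for any compactly supported smooth form $\sigma$ of degree $k$ or $m-k$ with values in $F$. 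Dividing by $\|\sigma\|_{L^2}$ (which is trivial when $\sigma=0$) and squaring yields the estimate $\|D\sigma\|_{L^2}^2\|V\|_\infty^2\ge(d_k^2/4)\|\sigma\|_{L^2}^2$.

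For the final claim about the Friedrichs extension, I would appeal to the standard fact that its bottom of spectrum equals the infimum of $\la\Delta\sigma,\sigma\ra_{L^2}/\|\sigma\|_{L^2}^2$ taken over nonzero $\sigma$ in the core $C^\infty_c(\Lambda^kO\otimes F)$; the inequality just proved gives precisely the asserted lower bound $d_k^2/(4\|V\|_\infty^2)$. I do not expect any substantive obstacle: the entire mathematical content sits in Theorem~\ref{BB8}, and this corollary is essentially bookkeeping, contingent only on $V$ being bounded (which is part of the standing assumption of Section~\ref{secdoxa}) and on the flatness of $F$ being used to turn $\|D\sigma\|_{L^2}^2$ into $\la\Delta\sigma,\sigma\ra_{L^2}$.
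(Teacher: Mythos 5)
Your proof is correct and follows exactly the route the paper intends: flatness gives $D^2=\Delta$ (degree-preserving), Theorem~\ref{BB8} with $\delta_k\ge d_k$ yields the inequality after squaring, and the Friedrichs bound follows by the variational characterization over the core $C^\infty_c(\Lambda^kO\otimes F)$. The paper treats this as immediate bookkeeping after Theorem~\ref{BB8}, which is precisely what you have written out.
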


\begin{rem}\label{comdel}
Recall that $\Delta$ on $C^\infty_c(\Lambda^kO\otimes F)$ is essentially self-adjoint
if $O$ is complete, so that, in this case,
the Friedrichs extension of $\Delta$ coincides with the closure of $\Delta$.
\end{rem}

\begin{rem}[Essential spectrum]\label{essu}
We note that
\begin{align*}
	\lambda_k^{\ess}(O,F) = \sup_C\lambda_k(O\setminus C,F|_{O\setminus C}),
\end{align*}
where $C$ runs over compact subsets of $O$.
This is, e.g., a consequence of \cite[Theorem A.14]{BallmannPolymerakis20},
whose proof in \cite{BallmannPolymerakis20}, which is for manifolds, also applies to orbifolds;
compare also with \cite[Proposition 4.8]{BallmannPolymerakis21}.
In particular, if $U$ is an open neighborhood of infinity in a Riemannian orbifold $O$,
we may let $U$ take over the role of $O$ in the above discussion
to conclude that $d_k^2/4\|V\|_\infty^2$ is a lower bound of the essential spectrum of $\Delta$,
where $V$ is a $C^{0,1}$-vector field on $U$ with the corresponding $d_k>0$.
\end{rem}

\subsection{Suborbifolds}
\label{suscalsu}
Let $P\looparrowright O$ be an isometrically immersed suborbifold of dimension $n\le m$.
Then the above discussion applies to the component $V^{\top}$ of $V$ tangential to $P$.
Denoting the Levi-Civita connection of $P$ by $\nabla^{\top}$, we have
\begin{align*}
	\la\nabla^{\top}_XV^{\top},Y\ra
	&= \la\nabla_XV^{\top},Y\ra \\
	&= \la\nabla_XV,Y\ra - \la\nabla_XV^\perp,Y\ra \\
	&= \la\nabla_XV,Y\ra + \la S(X,Y),V^\perp\ra,
\end{align*}
for all vector fields $X,Y$ tangential to $P$,
where $S$ denotes the second fundamental form of $P$.
Since $S$ is symmetric, we conclude that
\begin{align}\label{abara}
	\la BX,Y\ra
	= \la AX,Y\ra + \la S(X,Y),V^\perp\ra
\end{align}
for all vector fields $X,Y$ tangential to $P$,
where $A$ and $B$ denote the symmetric parts of $\nabla V$ and $\nabla^{\top}V^{\top}$, respectively.

For any $0\le k\le n$, define continuous functions
\begin{align}
	\delta_k = \delta_k(x) &= \inf_L \{\tr(B_x|_L)-\tr(B_x|_{L^\perp}\} \label{badek} \\
	\delta_k' = \delta_k'(x) &= \inf_L \{\tr(A_x|_L)-\tr(A_x|_{L^\perp}\} \label{deprik} \\
	\gamma_k = \gamma_k(x) &= \inf_L \{\tr(S_x^V|_L)-\tr(S_x^V|_{L^\perp}\} \label{gamk}
\end{align}
on $P$, where $L$ runs over all subspaces of $T_xP$ of dimension $m-k$
and
\begin{align*}
	S^V = S^V(X,Y) = \la S(X,Y),V\ra = \la S(X,Y),V^\perp\ra
\end{align*}
denotes the second fundamental form of $P$ in the direction of $V^\perp$.
If $H$ denotes the mean curvature vector field of $P$ and $h=|H|$, then
\begin{align}\label{ghv}
    \gamma_0=\la H,V^\perp\ra
    \hspace{3mm}\text{and}\hspace{3mm}
    |\gamma_0|\le h|V^\perp|.
\end{align}
By \eqref{abara},
\begin{align}\label{dedegak}
	\delta_k \ge \delta_k' + \gamma_k.
\end{align}
In particular, the analogs of \cref{BB8} and \cref{BB9} hold if
\begin{align}\label{dedegak2}
	\delta_k \ge \delta_k' + \gamma_k \ge 0
	\hspace{3mm}\text{and}\hspace{3mm}
    d_k \ge d_k' + h_k > 0,
\end{align}
respectively, where we let
\begin{align}\label{ddhk}
     d_k = \inf\delta_k, \hspace{3mm}
     d_k' = \inf\delta_k',
     \hspace{3mm}\text{and}\hspace{3mm}
     h_k = \inf\gamma_k.
\end{align}
Clearly, the comments in Remarks \ref{comdel} and \ref{essu} also apply.

\begin{rem}
In an analogous way, one may discuss Riemannian submersions,
where the vector field $V$ would be the horizontal lift of a vector field on the base.
However, since we do not have interesting new results along these lines,
we refrain from pursuing this and refer the interested reader to \cite{CavalcanteManfio18,Polymerakis20}
for results in this context.
\end{rem}

\subsection{Some simple applications}
\label{susim}
We discuss now three examples, in which the above results apply.
The computations in Examples \ref{pifix} and \ref{hyfix} will also be used later on
in the proofs of Theorems \ref{thmest} -- \ref{minsub}.

Let $X$ be a complete and simply connected Riemannian manifold of dimension $m$
with negative sectional curvature, $K_X<0$.
Denote by $X_\iota$ the \emph{ideal boundary} of $X$.
Let $\Gamma$ be a properly discontinuous group of isometries of $X$ that fixes a point $x\in X_\iota$.
Then the vector field $V$ of unit vectors in $X$ pointing away from $x$
is the gradient field of the Busemann functions centered at $x$
and is $C^{1}$ (see \cite{HeintzeImHof77}) with symmetric covariant derivative $A=\nabla V$.
It is invariant under $\Gamma$ and is therefore well defined on $O=\Gamma\backslash X$.
Recall that $A$ has eigenvalue $0$ in the direction of $V$.

Let $P\looparrowright O$ be an isometrically immersed suborbifold of dimension $n$
and $F\to P$ a flat Riemannian vector bundle over $P$.
The case $P=O$ is not excluded.

\begin{exa}[Strictly negative curvature]\label{stricfix}
Suppose that $K_X\le-a^2$ for some $a>0$.
Then $A\ge0$ and $A\ge a$ perpendicular to $V$, by \cref{curvest2}.\ref{cea2}.

For $x\in P$, the trace of $A$ on $T_xP$ is at least $(n-1)a$, by what we just said.
Therefore $\delta_0' \ge d_0' \ge (n-1)a$ and hence, by \eqref{dedegak2},
\begin{align}\label{bessamon}
	\lambda_0(P,F), \lambda_n(P,F)
	\ge \frac14((n-1)a+h_0)^2
\end{align}
if $-h_0<(n-1)a$.
In the case where $P=O=X$ and $F=\R$,
we have $h_0=0$ and \eqref{bessamon} is due to McKean \cite[p.\ 360]{McKean70}.
In the case where $O=X$ and $F=\R$, \eqref{bessamon} improves the corresponding
estimate \cite[Corollary 4.4]{BessaMontenegro03} of Bessa-Montenegro slightly.
\end{exa}

\begin{exa}[Pinched negative curvature]\label{pifix}
Suppose that $-b^2\le K_X\le-a^2$ for some $b>a>0$.
Then $A\ge0$ and $a\le A\le b$ perpendicular to $V$, by \cref{curvest2}.

Let $x\in P$ and $L\subseteq T_xP$ be a subspace of dimension $n-k$.
Then the trace of $A$ is at least $(n-k-1)a$ on $L$
and at most $kb$ on the perpendicular complement $L^\perp$ of $L$ in $T_xP$.
Hence
\begin{align}\label{pifixd}
	\delta_k' \ge d_k' \ge (n-k-1)a - kb 
\end{align}
and therefore, by \eqref{dedegak2},
\begin{align}\label{pifixl}
	\lambda_k(P,F), \lambda_{n-k}(P,F)
	\ge \frac14((n-k-1)a-kb+h_k)^2
\end{align}
for all $0\le k\le n$ with $kb-h_k<(n-k-1)a$.
In the case where $P=O=X$ and $F=\R$, we have $h_k=0$ and \eqref{pifixl} sharpens
the corresponding estimate \cite[Theorem 3.2]{DonnellyXavier84} of Donnelly-Xavier.
By \cite[Example 5.5]{BallmannBruening01},
\eqref{pifixl} and the associated estimate in \cref{BB8} are optimal in the case $m=n$.
\end{exa}

\begin{exa}[The hyperbolic case]\label{hyfix}
If $X=H_{\F}^\ell$, where $m=\ell d$, then $A=|R_X(.,V)V|^{1/2}$, by \cref{riccatic}.
In particular, $A$ has eigenvalue $2$ of multiplicity $d-1$, $1$ of multiplicity $(\ell-1)d$,
and $0$ of multiplicity $1$.

Let $x\in O$ and $L\subseteq T_xO$ be a subspace of dimension $m-k$.
If $k\le d-1$, then the trace of $A$ is at least $m+d-2k-2$ on $L$ and at most $2k$ on $L^\perp$.
If $k\ge d-1$, then the trace of $A$ is at least $m-k-1$ on $L$ and at most $k+d-1$ on $L^\perp$ of $L$.
Hence
\begin{align}\label{hyfixd}
	\delta_k' \ge d_k' \ge
	\begin{cases}
	m + d - 2 - 4k &\text{for $0\le k\le d-1$} \\
	m - d - 2k &\text{for $d-1\le k\le n$}
	\end{cases}
\end{align}
and therefore, by \eqref{dedegak2},
\begin{align}\label{hyfixl}
	\lambda_k(O,F), \lambda_{m-k}(O,F)
	\ge \frac14
	\begin{cases}
	((m + d - 2 - 4k + h_k)^2 \\
	((m - d - 2k + h_k)^2
	\end{cases}
\end{align}
for all $k\le d-1$ with $4k-h_k<m+d-2$ respectively all $k\ge d-1$ with $2k-h_k<m-d$.

Consider now $P\looparrowright O$ and $F\to P$ as above.
It would be nice to have inequalities analogous to \eqref{pifixd} and \eqref{pifixl}.
In the case $d=1$, that is, $\F=\R$,
this is no problem since it corresponds to the case $a=b$ in  \cref{pifix}.
However, for $\F\ne\R$, the eigenspace of $A$ for the eigenvalue $2$ has dimension $d-1>0$.
Depending now on $n$ and $k$, there are then various possibilities
of how the eigenspace intersects a subspace $L$ of a tangent space of $P$ of dimension $n-k$.
We have a satisfying answer for this issue only in the case $k=0$
for complex $P$ in complex hyperbolic $O$
and quaternion-K\"ahler $P$ in quaternion hyperbolic $O$; see below.
We start, however, with a non-optimal general inequality.

Let $x\in P$ and $L\subseteq T_xP$ be a subspace of dimension $n-k$.
Then, neglecting possible better contributions of eigenvalues $2$ of $A_x$ in $L$,
the trace of $A_x$ is at least $n-k-1$ on $L$
and is at most $k+(d-1)\wedge k$ on the perpendicular complement $L^\perp$ of $L$ in $T_xP$.
Hence
\begin{align}\label{hyfixd2}
	\delta_k' \ge d_k' \ge n - 1 - 2k - (d-1)\wedge k 
\end{align}
and therefore, by \eqref{dedegak2},
\begin{align}\label{hyfixl2}
	\lambda_k(P,F), \lambda_{n-k}(P,F)
	\ge \frac14(n -  1 - 2k - (d-1)\wedge k  + h_k)^2
\end{align}
for all $0\le k\le n$ with $2k+(d-1)\wedge k-h_k<n-1$.

The case $d=1$, that is, $\F=\R$,
corresponds to the case $a=b$ in \cref{pifix} and is optimal.
However, we can do better than in \eqref{hyfixl2} for $\F=\C$ and $k=0$
in the case where $P$ is a complex suborbifold of $O$
and for $\F=\H$ and $k=0$ in the case where $P$ is a quaternion-K\"ahler suborbifold of $O$.

To that end, we consider the case $\F=\C$ first, that is,
we let $O$ be a complex hyperbolic orbifold with complex structure $J$.
Then
\begin{align}\label{casech}
	A + J^{-1}AJ = 2\id
\end{align}
since $JV$ spans the field of eigenspaces of $A$ for the eigenvalue $2$
and the field of eigenspaces of $A$ for the eigenvalue $1$ is $J$-invariant.

Let $P\looparrowright O$ be a complex suborbifold and $x\in P$.
Then $T_xP$ is a complex subspace of $T_xO$, hence $\delta_0\ge n$ by \eqref{casech},
and therefore
\begin{align}\label{casech2}
	\lambda_0(P,F), \lambda_n(P,F) \ge \frac14n^2,
\end{align}
where we use that complex suborbifolds of K\"ahler orbifolds are minimal.

We let now $\F=\H$ and consider a quaternion hyperbolic orbifold $O$.
We let $x\in O$ and choose a compatible quaternion structure $IJ=K$ on $T_xO$.
Then
\begin{multline}\label{caseqh}
	A_x + I^{-1}A_xI + J^{-1}A_xJ + K^{-1}A_xK \\ =
	\begin{cases}
	6\id &\text{on the quaternion span of $V(x)$,} \\
	4\id &\text{on the eigenspace for the eigenvalue $1$ of $A_x$,}
	\end{cases}
\end{multline}
since $IV(x)$, $JV(x)$, and $KV(x)$ span the eigenspace of $A_x$ for the eigenvalue $2$
and the eigenspace of $A_x$ for the eigenvalue $1$ is a quaternion subspace.

Let $P\looparrowright O$ be a quaternion-K\"ahler suborbifold and $x\in P$.
Then $T_xP$ is a quaternion subspace of $T_xO$, hence $\delta_0\ge n$ by \eqref{caseqh},
and therefore
\begin{align}\label{caseqh2}
	\lambda_0(P,F), \lambda_n(P,F) \ge \frac14n^2,
\end{align}
where we use that quaternion-K\"ahler suborbifolds of quaternion-K\"ahler orbifolds are totally geodesic;
see \cite[Theorem 5]{Gray69}.
\end{exa}

There are applications similar to the ones in the above examples
in the case where $\Gamma$ fixes a point in the interior of $X$.
However, the global existence of the vector field $V$ makes the above examples particularly simple
and allows, without further ado, to obtain lower bounds for the spectrum instead of the essential spectrum.

\section{Geometrically finite orbifolds}
\label{secgefi}

Let $X$ be a complete and simply connected Riemannian manifold
with sectional curvature $-b^2\le K=K_X\le-a^2<0$.
Denote by $X_\iota$ the \emph{ideal boundary} of $X$ and set $X_c=X\cup X_\iota$,
the compactification of $X$ with respect to the cone topology \cite{EberleinONeill73}.
Let $\Gamma$ be a discrete group of isometries of $X$
and $\Lambda=\Lambda_\Gamma$ the \emph{limit set of $\Gamma$},
a closed and $\Gamma$-invariant subset of $X_\iota$.
Then $\Omega=\Omega_\Gamma=X_\iota\setminus\Lambda$
is called the \emph{domain of discontinuity of $\Gamma$}.
In fact, the action of $\Gamma$ on $X\cup\Omega$ is properly discontinuous
and $M_c(\Gamma)=\Gamma\backslash(X\cup\Omega)$ is a topological orbifold.
Recall that $O=\Gamma\backslash X$ is called \emph{convex cocompact} if $M_c(\Gamma)$ is compact.
More generally and following \cite[Definition on p.\,265]{Bowditch95},
we say that $O$ is \emph{geometrically finite}
if $M_c(\Gamma)$ has at most finitely many ends, and each end of $M_c(\Gamma)$ is parabolic.

To describe the notions of geometric finiteness and parabolic ends in the way we need it,
we need some more details about the geometry of $X$.
Terminology and results are mostly from \cite{Bowditch95}.

For any two points $x,y\in X_c$, we denote by $[x,y]$ the geodesic connecting them.
For $x\ne y$, we also use the notation $(x,y]$, $[x,y)$, and $(x,y)$
to exclude the respective endpoint or both of them from $[x,y]$.

For any closed subset $Q\subseteq X_c$, we denote by
\begin{enumerate}
\item\label{na}
$N_r(Q)$ the smallest closed subset of $X_c$ containing $Q$
and all points $x\in X$ of distance at most $r$ from $Q\cap X$;
\item\label{ja}
$JQ$ the union of geodesics $[x,y]$,
where $x,y$ run through pairs of points in $Q$;
\item\label{ha}
$HQ$ the closed convex hull of $Q$, that is,
the smallest closed and convex subset of $X_c$ containing $Q$.
\end{enumerate}
Clearly, $JQ$ is a closed subset of $X_c$ and $JQ\subseteq HQ$.
By \cite[Lemma 2.2.1]{Bowditch95},  $JQ$ is $\lambda$-\emph{quasiconvex}
in the sense that $JJQ\subseteq N_\lambda(JQ)$, where $\lambda=\lambda(a)>0$.

For $x\in X$, $y\ne x$ in $X_c$, and $\theta>0$, we define the \emph{(closed) cone}
\begin{align}\label{cone}
	C(x,y,\theta)
	= \{ z\in X_c \setminus \{x\} \mid \angle_x(y,z)\le\theta \} \cup \{x\}.
\end{align}
By \cite[Proposition 2.5.1]{Bowditch95} (following \cite{Anderson83}),
there is a $\theta_0=\theta_0(a/b)>0$ such that the convex hull
\begin{align}\label{covcon}
	HC(x,y,\theta) \subseteq C(x,y,\pi/2)
\end{align}
for all $x\in X$, $y\in X_c\setminus\{x\}$, and $0<\theta\le\theta_0$.
By \cite[Corollary 2.5.3]{Bowditch95}, we have
\begin{align}\label{covinf}
	HQ \cap X_\iota = Q \cap X_\iota
\end{align}
for any closed subset $Q\subseteq X_c$
and by \cite[Proposition 2.5.4]{Bowditch95},
that there is an $r=r(\lambda)>0$ such that
\begin{align}\label{covhu}
	HQ \subseteq N_r(Q)
\end{align}
for any $\lambda$-quasiconvex closed subset $Q\subseteq X_c$.

The most important case is $Q=\Lambda$, the limit set of $\Gamma$.
Since $\Lambda$ is invariant under $\Gamma$, 
the same holds for $J\Lambda$ and $H\Lambda$,
and $H\Lambda$ is a closed and convex subset of $X_c$.

If $|\Lambda|\ge2$, then $H\Lambda\cap X\ne\emptyset$.
Then, for any point $x\in X_c\setminus H\Lambda$,
there is a unique point $y=\pi_{H\Lambda}(x)\in H\Lambda\cap X$
such that $\angle_y(x,H\Lambda)\ge\pi/2$.
For $x\in H\Lambda\cap X$, we let $\pi_{H\Lambda}(x)=x$.
The \emph{(orthogonal) projection}
\begin{align}
	\pi_{H\Lambda} \colon X_c\setminus \Lambda = X \cup \Omega \to H\Lambda\cap X
\end{align}
is $\Gamma$-invariant and admits Lipschitz constant one on $X$.
Clearly, we may retract $X\cup\Omega$ and $X$
along the connecting geodesics $[x,\pi_{H\Lambda}(x)]$ onto $H\Lambda\cap X$,
and this deformation retraction is also $\Gamma$-invariant.
As a result, we obtain that the \emph{convex core}
\begin{align}\label{covcor}
	C = C_\Gamma = \Gamma\backslash(H\Lambda\cap X)
\end{align}
of $M_c(\Gamma)$ is a deformation retract of $M_c(\Gamma)$ and of $O$,
where the retraction is also along the corresponding geodesics.

We now come to the definition of parabolic ends.
We say that a group $G$ of isometries of $X$ is \emph{parabolic} if
\begin{enumerate}
\item[(P1)]
$G$ has a unique fix point $p\in X_c$, and $p$ belongs to $X_\iota$;
\item[(P2)]
$G$ leaves Busemann functions centered at $p$ invariant.
\end{enumerate}
Since finite groups of isometries of $X$ fix a point in $X$,
parabolic groups are infinite.

Let $G$ be a discrete parabolic group of isometries of $X$ with fix point $p\in X_\iota$.
Then $\Omega_G=X_\iota\setminus\{p\}$
and $M_c(G)=G\backslash(X\cup\Omega_G)$ has one end, \emph{the one coming from $p$}:
For $x\in X$, define
\begin{align}\label{cpx}
	C(x) = C_p(x) = \cap_{g\in G}HC(gx,p,\theta_0)\setminus\{p\},
\end{align}
a $G$-invariant closed and convex subset of $X_c\setminus\{p\}=X\cup\Omega_G$.
Moreover, the complement of $G\backslash C(x)$ in $M_c(G)$ is relatively compact.
Thus $G\backslash C(x)$ is a neighborhood of the--unique--end of $M_c(G)$.
Clearly, if $x=x_0\in X$ and $(x_n)$ is a sequence of points on $[x,p)$ converging to $p$,
then $\cap C(x_n)=\emptyset$.
In fact, the $G\backslash C(x_n)$ constitute a basis of neighborhoods of the end of $M_c(G)$.
Compare with \cite[255:14--21]{Bowditch95}.

We say that a point $p\in X_\iota$ is a \emph{parabolic point of $\Gamma$}
if the stabilizer $G_p$ of $p$ in $\Gamma$ is a parabolic group such that,
for $x\in X$ sufficiently close to $p$, the set $C_p(x)$,
defined with respect to $G=G_p$, is \emph{precisely invariant};
that is, 
\begin{align}\label{precise}
	g\in \Gamma
	\hspace{2mm}\text{and}\hspace{2mm}
	gC_p(x) \cap C_p(x) \ne \emptyset
	\hspace{3mm}\Longrightarrow\hspace{3mm} 15
	g\in G_p, 
\end{align}
and then $gC_p(x)=C_p(x)$.
Then $G_p\backslash C_p(x)$ embeds into $M_c(\Gamma)$
and the unique end of $G_p\backslash C_p(x)$ is an end of $O$
and, by definition, a \emph{parabolic end};
compare with \cite[264:1--20]{Bowditch95}, where the phrasing is somewhat different.

Finally, we say that $O$ is \emph{geometrically finite} if $M_c(\Gamma)$ has at most finitely many ends,
and each end of $M_c(\Gamma)$ is parabolic.

\section{Construction of a vector field $V$}
\label{secv}

Let $O=\Gamma\backslash X$ be a geometrically finite orbifold, $\Lambda$ the limit set of $\Gamma$
and $H\Lambda\subset X_c$ the closed convex hull of $\Lambda$; cf.\ \cref{secgefi}.
Recall that we proved \cref{mainthm} at the end of \cref{secconv} in the case where $|\Lambda|\le1$.
Therefore we assume in this section that $|\Lambda|\ge2$.
Then $H\Lambda\cap X\ne\emptyset$, and the distance function $f=f(x)=d(x,H\Lambda)$
is well-defined on $X$ and is $C^1$ on $X\setminus H\Lambda$.

\subsection{Construction of a vector field $V_0$}
\label{sucv}
For $x\in X\setminus H\Lambda$,
the negative gradient $-\nabla f|_x$ is the unit vector in the direction of the point $\pi_{H\Lambda}(x)$
in $H\Lambda$ nearest to $x$.
Identifying the tangent bundle $TX$ via the exponential map with $X\times X$
and using that $\pi_{H\Lambda}$ admits Lipschitz constant one,
we see that the vector field $V_0=\nabla f$ is $C^{0,1}$
and, as a consequence, that $f$ is $C^{1,1}$.
Since $H\Lambda$ is $\Gamma$-invariant, $V_0$ is $\Gamma$-invariant
and induces therefore a vector field on $\Gamma\backslash X$
outside of the convex core $C=\Gamma\backslash(H\Lambda\cap X)$ of $O$.

At any point $x\in X\setminus H\Lambda$ in the set of full measure
at which $f$ is twice differentiable with symmetric second derivative $\nabla\nabla f$,
$V_0$ is differentiable with differential $\nabla_{\dot\sigma(0)}V_0=J'(r)$,
where $J$ is the Jacobi field along the unit speed geodesic $c$
from $\pi_{H\Lambda}(x)$ through $x=c(r)$ with $J(r)=\dot\sigma(0)$,
where $J$ corresponds to the variation of $c=c_0$ by the geodesics $c_s$
from $\pi_{H\Lambda}(\sigma(s))$ through $\sigma(s)$.
By convexity, we have $\la J(0),J'(0)\ra\ge0$ for any such $J$,
and therefore \cref{curvest} implies that, for any $\ve>0$, 
\begin{align}\label{est1}
	a\tanh(ar)-\ve \le \nabla V_0|_x \le b\coth(br)+\ve
\end{align}
on $V_0(x)^\perp$ if $r=f(x)$ is sufficiently large and $x$ is as above.
Then the symmetric endomorphism $\nabla V_0|_x$
has eigenvalue $0$ in the direction of $V_0(x)$
and eigenvalues in $[a\tanh(ar)-\ve,b\coth(br)+\ve]$ on $V_0(x)^\perp$.
In particular, if $r$ is sufficiently large and $x$ is as above,
then the eigenvalues of $\nabla V_0|_x$ are in $[a-\ve,b+\ve]$,
except for the eigenvalue $0$ in the direction of $V_0(x)$.

In the case where $X=H_\mathbb{F}^\ell$ is a hyperbolic space with $\max K_X=-1$
and $x$ is as above, we get from \cref{riccatil} that
\begin{align}\label{est2}
	\|A_x - \nabla V_0|_x \| \le \ve
\end{align}
if $r=f(x)$ is sufficiently large, where $A_x=|R(.,V_0(x))V_0(x)|^{1/2}$.
Here the conclusion is that, except for the eigenvalue $0$ in the direction of $V_0(x)$,
$\nabla V_0|_x$ has $d-1$ eigenvalues close to $2$ and $d(\ell-1)$ eigenvalues close to $1$,
where $d=\dim_{\R}\mathbb{F}$.

\subsection{Construction of vector fields $V_p$}
\label{sucvp}
Busemann functions $f$ centered at a parabolic point $p$
are invariant under $G_p$ and differ by a constant only.
Since Busemann functions are $C^2$ with uniformly bounded second derivative \cite{HeintzeImHof77},
in fact smooth in the case where $X$ is a hyperbolic space,
the gradient vector field $V=V_p=\nabla f$ is well defined and $C^1$ with uniformly bounded derivative.
It has constant norm one and is invariant under $G_p$,
hence defines a vector field on the neighborhood $G\backslash C(x)$
of the corresponding end of $M_c(\Gamma)$.
Recall that, for all $x\in X$,
\begin{align}\label{est3}
	a\tanh(ar) \le \nabla V_p|_x \le b\coth(br)
\end{align}
on $V_0(x)^\perp$.
Furthermore, if $X=H_F^n$, then
\begin{align}\label{est4}
	\nabla V_p|_x = A_x
\end{align}
for all $x\in X$.
Thus the conclusions from \cref{sucv} hold also for the $\nabla V_p$.

\subsection{Joining $V_0$ to the $V_p$}
\label{sucvvp}
Our aim is now to combine $V_0$ with the various vector fields $V_p$
to a single vector field $V$ of norm approximately one with appropriate estimates of $\nabla V$.
Outside of the neighborhoods $G_p\backslash C_p(x)$ of the ends of $M_c(\Gamma)$,
we let $V=V_0$.
Inside of the $G_p\backslash C_p(x)$,
we change from $V_0$ to the corresponding $V_p$ as described in what follows.

Given any discrete group $\Gamma$ of isometries of $X$, $\ve>0$, and $x\in X$,
let $\Gamma_\ve(x)$ be the subgroup of $\Gamma$ generated by the elements $g\in\Gamma$
with $d(x,gx)<\ve$ and set
\begin{align}\label{teps}
	T_\ve(\Gamma) = \{ x\in X \mid |\Gamma_\ve(x)|=\infty\}.
\end{align}
We are going to use these notions for the group $\Gamma$ in question and its parabolic subgroups $G_p$.
Recall that, by the Margulis lemma, $\Gamma_\ve(x)$ is almost nilpotent if $0<\ve<\ve(m,\kappa)$.

In what follows, let $p$ and $G_p$ be as above and $0<\ve<\ve(m,\kappa)$.
Let $Q\subseteq X_\iota\setminus\{p\}$ be a closed and $G_p$-invariant subset
such that $G_p\backslash Q$ is compact, and set $H=H(Q\cup\{p\})\setminus\{p\}$.
By \cite[Lemma 4.10]{Bowditch95}, given $q\in X_\iota\setminus\{p\}$,
there are a horoball $B$ in $X$ with center $p$ and an $r>0$ such that
\begin{align}\label{bow1}
	H \cap T_\ve(G_p) \subseteq H \cap B \subseteq N_r(G_p(q,p)) \cap B.
\end{align}
Conversely, by \cite[Lemma 4.11]{Bowditch95}, for any $q\in X_\iota\setminus\{p\}$ and $r>0$,
there is a horoball $B$ in $X$ with center $p$ such that
\begin{align}\label{bow2}
	N_r(G_p(q,p)) \cap B \subseteq T_\ve(G_p).
\end{align}
Furthermore, by \cite[Proposition 4.12]{Bowditch95}, for $x\in(q,p)$ sufficiently close to $p$,
\begin{align}\label{bow3}
	C_p(x) \subseteq \pi_H^{-1}(H \cap T_\ve(G_p)),
\end{align}
where $\pi_H \colon X_c\setminus\{p\} \to H$ is the projection.

By \cite[Lemma 5.11]{Bowditch95}, $Q=\Lambda\setminus\{p\}$ satisfies the assumptions on $Q$ above.
Then $H=H\Lambda\setminus\{p\}$.

\begin{lem}\label{prepar}
Let $p\in X_\iota$ be a parabolic point of $\Gamma$ with stabilizer $G_p\subseteq\Gamma$.
Let $0<\ve<\ve(m,\kappa)$, $r>0$, and $q\in X_\iota\setminus\{p\}$.
Then there is an $s>0$ such that,
for all $x\in X\setminus N_s(G_p(q,p))$ with $\pi_H(x)\in H\cap N_r(G_p(q,p))$,
there is a $g\in G$ such that $\angle_x(\pi_H(x),y)<\ve$ for all $y\in g(q,p)$.
\end{lem}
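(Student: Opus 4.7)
The plan is to exploit pinched negative curvature to turn a distance condition on $x$ into an angular condition at $x$. Concretely, because $\pi_H(x)\in H\cap N_r(G_p(q,p))$, we may choose $g\in G_p$ and a point $z\in g(q,p)$ with $d(\pi_H(x),z)\le r$. Fix this $g$; I will show that for $s$ large enough (depending only on $a,b,r,\ve$), the angle $\angle_x(\pi_H(x),y)$ is smaller than $\ve$ for every $y\in g(q,p)$, using the splitting
\[
\angle_x(\pi_H(x),y) \le \angle_x(\pi_H(x),z) + \angle_x(z,y).
\]

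For the first term, the hypothesis $x\notin N_s(G_p(q,p))$ gives $d(x,z)\ge s$ (as $z\in g(q,p)\subseteq G_p(q,p)$), and hence the reverse triangle inequality yields $d(x,\pi_H(x))\ge s-r$. The Euclidean comparison triangle with sides $d(x,\pi_H(x))$, $d(x,z)$, $d(\pi_H(x),z)$ therefore has angle at the $x$-vertex at most $\arcsin(r/(s-r))$, and by $\mathrm{CAT}(0)$-comparison (valid since $K_X\le -a^2<0$) the same upper bound holds for $\angle_x(\pi_H(x),z)$. This goes to $0$ as $s\to\infty$.

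For the second term, note that $z$ and $y$ both lie on the bi-infinite geodesic $\gamma=g(q,p)$, and $d(x,\gamma)\ge s$. The key fact I would use is that, in a manifold with $K\le -a^2<0$, the visual angle subtended at $x$ by a complete geodesic $\gamma$ at distance $s$ is bounded above, via curvature comparison, by twice the angle of parallelism $\Pi_a(s)$ of the model space of constant curvature $-a^2$, which satisfies $\tan(\Pi_a(s)/2)=e^{-as}$ and hence tends to $0$ as $s\to\infty$. Since $y$ is an arbitrary point on $\gamma$ (possibly the ideal endpoint $p$ or $gq$), $\angle_x(z,y)$ is dominated by this visual angle.

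Combining, both contributions can be made smaller than $\ve/2$ by choosing $s=s(a,b,r,\ve)$ sufficiently large, proving the claim. The only real obstacle is bookkeeping: making sure the Alexandrov comparison inequality is applied in the correct direction (in nonpositive curvature, angles of geodesic triangles are no larger than their Euclidean comparison counterparts) and that the visual angle bound is genuinely uniform in the position of $y\in\gamma$, including at the ideal endpoints. Everything else is just triangle inequalities and the explicit trigonometry of the model space.
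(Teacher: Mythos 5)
Your proof is correct and follows essentially the same route as the paper's: one estimate making the visual angle of the orbit geodesic $g(q,p)$ from $x$ small once $d(x,G_p(q,p))\ge s$ is large, and one comparison estimate making $\angle_x(\pi_H(x),z)$ small because $d(\pi_H(x),z)\le r$ while $x$ is far away, then the triangle inequality for angles. You have merely made explicit the comparison arguments (CAT$(0)$ comparison and the angle of parallelism in curvature $-a^2$) that the paper leaves implicit.
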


\begin{proof}
Let $g\in G_p$.
Then, if $s>s(\ve)$, we have $\angle_x(g(q,p))<\ve/2$.
On the other hand, if $g(q,p)$ contains a point $z$ with $d(z,\pi_H(x))\le r$,
then $\angle_x(z,\pi_H(x))<\ve/2$ if $s>s(r)$.
Combining the two estimates, we obtain the assertion.
\end{proof}

\begin{proof}[Proof of \cref{mainthm}]
The searched for set $C$ will be a large neighborhood of the thick part of the convex core of $\Gamma\backslash X$.
The technical problems in the discussion are due to the parabolic ends of $\Gamma\backslash X$.

Choose a set $R\subseteq X_\iota$ of representatives modulo $\Gamma$
of the set $P$ of parabolic points of $\Gamma$,
and note that $R$ is finite.
Given $0<\ve<\ve(m,\kappa)$, there is a point $x_p\in X$ for each $p\in R$
such that each $C_p(x_p)$ is precisely invariant,
that the $C_p(x_p)$ are pairwise disjoint, and, by passing to a smaller $\ve>0$ if necessary, that
\begin{align*}
	T_\ve(\Gamma)\cap C_p(x_p)
	= T_\ve(G_p)\cap C_p(x_p)
	= T_\ve(G_p)
\end{align*}
for all $p\in R$.
We extend these choices $\Gamma$-equivariantly to $P$. 
The statements corresponding to the above then hold for all $p\in P$.
Moreover, since $\Gamma$ is geometrically finite,
the $\Gamma$-invariant set
\begin{align*}
	H_0 = H\Lambda\setminus\cup_{p\in P} \mathring C_p(x_p)
\end{align*}
is compact modulo $\Gamma$.

Let $p\in R$ and $q\in Q=\Lambda\setminus\{p\}$ be the point with $x_p\in(q,p)$.
Choose a horoball $B=B_0$ with center $p$ and an $r>0$ which satisfy \eqref{bow1},
and let $x_0=(q,p)\cap\partial B_0$.
Choose a point $x_1\in(x_0,p)$ with $d(x_0,x_1)\ge1$ such that $x_p\in(q,x_1]$
and such that $C_p(x_1)$ satisfies \eqref{bow3}.
Let $B_1\subseteq B_0$ be a horoball with center $p$ contained in $C_p(x_1)$
and $B_2\subseteq B_1$ be the horoball with center $p$ such that $d(\partial B_1,B_2)=1$.
Then any point in $B_1\setminus\mathring B_2$ projects under $\pi_H$ to $H\cap T_\ve(G_p)$,
which is contained in $N_r(G_p(q,p))\cap B_0$.
Choose $s=s(r,\ve)$ according to \cref{prepar}.
Then $|V_0-V_p|<\ve$ on $C_p(x_1)\setminus N_s(G_p(p,q))$.
Furthermore, $(C_p(x_1)\cap N_s(G_p(p,q)))\setminus \mathring B_2$ is compact modulo $G_p$
and contains $B_1\setminus\mathring B_2$.
Now we change from $V_0$ to $V_p$ on $(C_p(x_1)\setminus N_s(G_p(p,q))) \cup\mathring B_2$,
using the Lipschitz one function $\psi$ which is one outside of $B_1$ and zero inside of $B_2$,
by setting
\begin{align*}
	V = \psi V_0 + (1-\psi) V_p.
\end{align*}
It has covariant derivative
\begin{align*}
	\nabla V
	= \psi\nabla V_0 + (1-\psi)\nabla V_p + \nabla\psi\otimes(V_0-V_p).
\end{align*}
Since $|\nabla\psi|\le1$, the norm of the error term
\begin{align*}
	\nabla\psi\otimes(V_0-V_p)
\end{align*}
is bounded by $|V_0-V_p|<\ve$ on $B_1\setminus (N_s(G_p(q,p))\cup\mathring B_2)$
and vanishes otherwise.
Since $V_0$ and $V_p$ are $G_p$-invariant on $C_p(x_1)$,
$V$ pushes down to a vector field on the corresponding part of $G_p\backslash C_p(x_1)$.
It has the property that, given $\ve>0$, there is a compact subset $C$ of $\Gamma\backslash X$
such that $|V|=1\pm\ve$ and that $\nabla V$ has the asserted properties.
\end{proof}

\section{Estimating the essential spectrum}
\label{secapp}

We now apply \cref{mainthm},
using the results from \cref{secdoxa}.
Let $O=\Gamma\backslash X$ be a geometrically finite orbifold of dimension $m$
with sectional curvature $-b^2\le K_O\le -a^2<0$, where $0<a\le b$.
Let $P\looparrowright O$ be a properly immersed suborbifold of dimension $n\le m$,
endowed with the induced Riemannian metric
and $F\to P$ be a flat Riemannian vector bundle.

\begin{thm}\label{thmpin}
If $(n-k-1)a-kb+h_k>0$, then 
\begin{align*}
	\lambda_k^{\ess}(P,F), \lambda_{n-k}^{\ess}(P,F)
	\ge \frac14((n-k-1)a-kb+h_k)^2
\end{align*}
\end{thm}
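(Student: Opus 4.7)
The approach is to apply \cref{mainthm} to the ambient orbifold $O$, restrict the resulting vector field to $P$, and invoke the suborbifold estimate encoded in \eqref{dedegak2}--\eqref{ddhk}, together with the essential spectrum characterization of \cref{essu}. Fix $\varepsilon>0$. By \cref{mainthm}, there exist a compact subset $C\subseteq O$ and a locally Lipschitz vector field $V$ on $O\setminus C$ with $|V|=1\pm\varepsilon$ such that, on a subset of full measure, the symmetric part $A$ of $\nabla V$ has one eigenvalue in $(-\varepsilon,\varepsilon)$ and its remaining $m-1$ eigenvalues in $(a-\varepsilon,b+\varepsilon)$. Since $P\looparrowright O$ is proper, the preimage of $C$ in $P$ has compact closure $C'$, and we restrict $V$ to $P\setminus C'$ for use in the suborbifold setup of \cref{suscalsu}.

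The core of the argument is to show that $d_k'\ge (n-k-1)a-kb$ up to an error vanishing with $\varepsilon$. At almost every $x\in P\setminus C'$, let $\beta_1\le\cdots\le\beta_n$ denote the eigenvalues of the symmetric bilinear form $\la A\cdot,\cdot\ra$ restricted to $T_xP$. By the Cauchy interlacing theorem for principal submatrices of symmetric matrices, $\beta_i$ lies between the $i$-th and $(m-n+i)$-th eigenvalue of $A$ on $T_xO$. Since exactly one eigenvalue of $A$ on $T_xO$ lies below $a-\varepsilon$, this forces $\beta_1>-\varepsilon$, $\beta_i>a-\varepsilon$ for $i\ge 2$, and $\beta_n<b+\varepsilon$. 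For any subspace $L\subseteq T_xP$ of dimension $n-k$, the min--max characterization of sums of eigenvalues then yields
\begin{align*}
\tr(A|_L)-\tr(A|_{L^\perp})
&\ge \sum_{i=1}^{n-k}\beta_i-\sum_{i=n-k+1}^{n}\beta_i \\
&\ge -\varepsilon+(n-k-1)(a-\varepsilon)-k(b+\varepsilon),
\end{align*}
so $d_k'\ge (n-k-1)a-kb-n\varepsilon$.

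Combining this with \eqref{dedegak2} gives $d_k\ge d_k'+h_k\ge (n-k-1)a-kb+h_k-n\varepsilon$, which is positive once $\varepsilon$ is small. The suborbifold analog of \cref{BB9} applied on $P\setminus C'$, together with \cref{essu}, then yields
\begin{align*}
\lambda_k^{\ess}(P,F),\;\lambda_{n-k}^{\ess}(P,F)\ge \frac{d_k^2}{4\|V\|_\infty^2}\ge \frac{\bigl((n-k-1)a-kb+h_k-n\varepsilon\bigr)^2}{4(1+\varepsilon)^2}.
\end{align*}
Letting $\varepsilon\to 0$ finishes the proof. The main technical point is the trace estimate in the previous paragraph: the multiplicity-one clause of \cref{mainthm} is precisely what is needed so that at most one $\beta_i$ falls below $a-\varepsilon$ upon restriction to $T_xP$, via Cauchy interlacing; the sharper information that the small eigenline is $\varepsilon$-close to $V$ is not used here but becomes essential for the hyperbolic refinement in \cref{hyfix} behind \cref{thmest2} and \cref{minsub}.
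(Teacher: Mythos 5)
Your proposal is correct and follows essentially the same route as the paper: apply \cref{mainthm}, use properness to discard the compact part of $P$, bound $\delta_k'$ on $T_xP$ by $(n-k-1)a-kb-n\ve$, and conclude via \eqref{dedegak2}, the suborbifold analog of \cref{BB9}, and \cref{essu}. Your Cauchy-interlacing step is just a repackaging of the trace estimate the paper carries out in \cref{pifix} (trace at least $(n-k-1)(a-\ve)-\ve$ on $L$, at most $k(b+\ve)$ on $L^\perp$), so the two arguments coincide in substance.
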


\begin{proof}
For any $\ve>0$, choose $C$ and $V$ as in \cref{mainthm}.
Then, on $O\setminus C$, $|V|=1\pm\ve$ and the corresponding $\delta_k$ is at least $(n-k-1)a-kb-n\ve$.
Since $P$ is properly immersed, $P\cap C$ is compact in $P$.
Hence
\begin{align*}
	\frac14\frac{((n-k-1)a-kb+h_k-n\ve)^2}{(1+\ve)^2}
\end{align*}
is a lower bound for the essential spectrum of $P$, by \cref{BB9},
the characterization of the essential spectrum in \cref{essu},
and the computations in \cref{pifix}.
\end{proof}

\cref{thmest} corresponds to the case $P=O$ in \cref{thmpin}.
In a similar fashion, Theorems \ref{thmest2} and \ref{minsub}
are consequences of \cref{mainthm}, \cref{BB9}, \cref{essu}, and \cref{hyfix}.

\appendix

\section{The symmetry of second derivatives}
\label{symmetry}

Say that $x\in\R^m$ is a $2$-Lebesgue point of a map $f\colon\R^m\to\R^n$ if,
for any orthonormal $u,v\in\R^m$ tangent to coordinate directions,
\begin{align*}
	\lim_{r\to0}\frac1{r^2}\int_0^r\int_0^r |f(x+su+tv)-f(x)| = 0.
\end{align*}
Together with the Fubini theorem,
the Lebesgue differentiation theorem implies that almost any point of $\R^m$
is a $2$-Lebesgue point of $f$ if $f$ is locally integrable;
compare with \cite[Section 2.1.4]{GiaquintaModica09}.

\begin{lem}\label{lemsym}
For any $f\in C^{1,1}(\R^m,\R^n)$,
$d^2f(x)$ is symmetric at each $2$-Lebesgue point $x$ of the map $d^2f$.
\end{lem}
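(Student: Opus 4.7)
The strategy is to evaluate the mixed second difference
\[\Delta(r)=f(x+ru+rv)-f(x+ru)-f(x+rv)+f(x)\]
in two different orders and check that, after $r^{-2}$-normalization, the two limits force $\partial_u\partial_v f(x)$ and $\partial_v\partial_u f(x)$ to coincide for every pair of coordinate directions $u=e_i$, $v=e_j$.

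For the first order, since $f$ is $C^1$, the fundamental theorem of calculus in the $u$-direction gives $\Delta(r)=\int_0^r\bigl[\partial_u f(x+\sigma u+rv)-\partial_u f(x+\sigma u)\bigr]d\sigma$. Because $\partial_u f$ is globally Lipschitz, its restriction to every affine line is absolutely continuous, so a second application of the fundamental theorem of calculus in the $v$-direction plus Fubini yields
\[\Delta(r)=\int_0^r\!\!\int_0^r D_v\partial_u f(x+\sigma u+\tau v)\,d\tau\,d\sigma,\]
where the integrand is defined for a.e.\ $(\sigma,\tau)$ and is bounded by $\Lip(df)$. Interchanging the roles of $u$ and $v$ gives the analogous formula with $D_u\partial_v f$ in place of $D_v\partial_u f$.

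The next step is to identify, on the two-plane through $x$ spanned by $u,v$, the one-dimensional directional derivatives $D_v\partial_u f$ and $D_u\partial_v f$ with the ambient entries $\partial_v\partial_u f$, $\partial_u\partial_v f$ of $d^2f$. By Rademacher's theorem applied to the Lipschitz map $\partial_u f$, equality holds at a.e.\ point of $\R^m$; after passing to a precise Lebesgue-point representative of $d^2f$, a Fubini argument along the one-parameter family of lines $\tau\mapsto x+\sigma u+\tau v$ filling the two-plane upgrades this to equality two-dimensionally a.e.\ on the plane. Dividing both expressions for $\Delta(r)$ by $r^2$ and invoking the 2-Lebesgue property of $d^2f$ at $x$, both sides converge to the respective entries of $d^2f(x)$, so
\[\lim_{r\to 0}\frac{\Delta(r)}{r^2}=\partial_v\partial_u f(x)=\partial_u\partial_v f(x).\]
Varying $i,j$ gives the symmetry of $d^2f(x)$.

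The main obstacle is the identification in the preceding paragraph: the two-plane through $x$ is an $m$-dimensional null set, so \emph{a priori} the ambient partial derivatives $\partial_v\partial_u f$ and $\partial_u\partial_v f$, which are defined only $m$-a.e., have no intrinsic meaning on this plane. What saves the argument is that both the double-integral formulas from the first step and the 2-Lebesgue condition are insensitive to modifications of $d^2f$ on $m$-null sets, while the Fubini structure of Rademacher's theorem provides a representative for which the identification holds on the two-plane through $x$; this matching between Lebesgue representatives and Fubini slices is precisely why the statement is phrased in terms of 2-Lebesgue points rather than arbitrary points.
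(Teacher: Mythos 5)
Your computational core is the same as the paper's: write the mixed second difference as an iterated integral over the corner square in two orders (FTC in one direction using $f\in C^1$, then FTC along lines using the Lipschitz continuity of $df$), and use the $2$-Lebesgue condition at $x$ to identify both $r^{-2}$-normalized limits with the two entries of $d^2f(x)$. Up to the identification step this is fine, and in fact more scrupulous than the paper, which writes $d^2f(x+su+tv)(u,v)$ directly in the integrand without comment.

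The flaw is in how you justify that identification. The claim that the $2$-Lebesgue condition is ``insensitive to modifications of $d^2f$ on $m$-null sets'' is false: the condition at $x$ is an integral over corner squares lying inside the two-plane through $x$, which is itself $m$-null, so it depends exactly on the values of $d^2f$ on that plane. Likewise, Rademacher plus Fubini only yields that for \emph{almost every} plane parallel to the $(u,v)$-coordinate plane the identification $D_v\partial_u f=d^2f(\cdot)(v,u)$ holds $2$-a.e.; it says nothing about the particular plane through the given $x$, and there is no freedom to ``pass to a representative'', since both $x$ and $d^2f$ (the honest second derivative, defined wherever $df$ is differentiable) are fixed by the statement. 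The correct resolution is simpler: for $x$ to be a $2$-Lebesgue point of $d^2f$ at all, the integrals in the definition must make sense, i.e.\ $d^2f$ must exist at $2$-a.e.\ point of the corner squares through $x$; and at every point where the full differential of $df$ exists, the line derivatives $D_v\partial_u f$ and $D_u\partial_v f$ exist and equal the corresponding entries of $d^2f$ automatically. Hence your two integrands agree $2$-a.e.\ on the square with $d^2f(\cdot)(v,u)$ and $d^2f(\cdot)(u,v)$, and the $2$-Lebesgue condition finishes the proof, with no slicing of Rademacher's theorem and no choice of representative. (If one refuses to read this existence statement into the definition, the honest remedy is to note, via Fubini, that a.e.\ $x$ is a $2$-Lebesgue point with this additional property, which is all the paper needs.)
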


In view of our needs, we assume that $f$ is $C^{1,1}$,
but somewhat weaker assumptions would also be sufficient.

\begin{proof}[Proof of \cref{lemsym}]
For $u,v\in\R^m$, we have
\begin{align*}
	f(x+ru+rv) - f(x&+ru) - f(x+rv) + f(x) \\
	&= \int_0^r \{df(x+ru+tv)-df(x+tv)\}v\dt \\
	&= \int_0^r\int_0^r d^2f(x+su+tv)(u,v)\ds\dt \\
	&= I_{u,v}(r) + r^2d^2f(x)(u,v),
\end{align*}
where we note, for the penultimate equality, that $df$ is $C^{0,1}$ and where
\begin{align*}
	I_{u,v}(r) = \int_0^r\int_0^r \{ d^2f(x+su+tv)-d^2f(x)\}(u,v)\ds\dt.
\end{align*}
Interchanging the roles of $u$ and $v$, we obtain
\begin{align*}
	f(x+ru+rv) - f(x&+ru) - f(x+rv) + f(x) \\
	&= \int_0^r\int_0^r d^2f(x+su+tv)(v,u)\dt\ds \\
	&= I_{v,u}(r) + r^2d^2f(x)(v,u).
\end{align*}
If $x$ is a $2$-Lebesgue point of $d^2f$ and $u,v$ are orthonormal and tangent to coordinate directions,
then we have
\begin{align*}
	\lim_{r\to0}\frac1{r^2}  I_{u,v}(r)
	= \lim_{r\to0}\frac1{r^2} I_{v,u}(r)
	= 0.
\end{align*}
Therefore, by the above computations,
\begin{align*}
	|d^2f(x)(u,v)-d^2f(x)(v,u)|
	&\le \lim_{r\to0}\frac1{r^2} |I_{u,v}(r)-I_{v,u}(r)| = 0.
\end{align*}
Hence $d^2f(x)$ is symmetric.
\end{proof}

\begin{cor}
For any $k\ge1$ and $f\in C^{k,1}(\R^m,\R^n)$,
$d^{k+1}f(x)$ is symmetric at each $2$-Lebesgue point $x$ of the map $d^{k+1}f$.
\end{cor}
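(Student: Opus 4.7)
The plan is to reduce the corollary to a single application of Lemma \ref{lemsym}, without a fresh induction, by moving all but the last derivative into the target space. Set $g := d^{k-1}f$. Since $f\in C^{k,1}(\R^m,\R^n)$, the map $g$ takes values in the finite-dimensional space of $(k-1)$-multilinear forms $(\R^m)^{k-1}\to\R^n$ and is of class $C^{1,1}$, with $dg = d^k f$ Lipschitz continuous. After fixing a basis to identify this target with some $\R^N$, Lemma \ref{lemsym} applies verbatim to $g$.

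At any $2$-Lebesgue point $x$ of $d^{k+1}f = d^2 g$, Lemma \ref{lemsym} then gives that the bilinear form $d^2 g(x)$ is symmetric. Unfolding the identification, this says exactly that $d^{k+1}f(x)(u_0,u_1,v_1,\dots,v_{k-1})$ is invariant under transposing $u_0$ and $u_1$; in other words, $d^{k+1}f(x)$ is symmetric in its first two arguments (positions $0$ and $1$).

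For the symmetries in the remaining arguments I would invoke the classical Schwarz theorem: since $f\in C^k$, the tensor $d^k f(y)$ lies in $\mathrm{Sym}^k((\R^m)^*)\otimes\R^n$ for every $y$. Taking one more derivative preserves this symmetry in the ``inner'' slots, so $d^{k+1}f(x) = d(d^k f)(x)$ lies in $(\R^m)^*\otimes\mathrm{Sym}^k((\R^m)^*)\otimes\R^n$ and is therefore symmetric in positions $1,\dots,k$. Combining the transposition $(0\,1)$ from Lemma \ref{lemsym} with the full symmetric group $S_k$ permuting positions $\{1,\dots,k\}$, one obtains all of $S_{k+1}$, since for every $j\ge1$ the transposition $(0\,j)$ factors as $(1\,j)(0\,1)(1\,j)$. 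Hence $d^{k+1}f(x)$ is symmetric under every permutation of its $k+1$ arguments.

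The argument has no serious obstacle; the only points to verify carefully are two bookkeeping items. First, Lemma \ref{lemsym} is stated for $\R^n$-valued maps, but its proof uses only the vector-space structure of the target and transfers without change to any finite-dimensional inner-product space, in particular to the space of multilinear forms in which $g$ takes values. Second, the $2$-Lebesgue condition on $d^{k+1}f$ appearing in the corollary coincides literally with the $2$-Lebesgue condition on $d^2 g$ required by the lemma, because $d^2 g$ and $d^{k+1}f$ are the same vector-valued almost-everywhere-defined map up to the chosen identification.
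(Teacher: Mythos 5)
Your argument is correct and is exactly the reduction the paper leaves implicit for this corollary: apply \cref{lemsym} to $g=d^{k-1}f$, which is $C^{1,1}$ with values in a finite-dimensional space, use the classical Schwarz symmetry of $d^kf$ for the inner slots, and observe that the outer transposition together with $S_k$ on the inner positions generates $S_{k+1}$. The two bookkeeping points you single out (vector-valued target and the identification of the $2$-Lebesgue conditions for $d^2g$ and $d^{k+1}f$) are handled correctly.
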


\bibliographystyle{amsplain}
\bibliography{References}
\end{document}